\tikzset{
  treenode/.style = {align=center, inner sep=0pt, text centered,
    font=\sffamily},
  arn_r/.style = {treenode, circle, blue, draw=blue,fill=yellow, 
    text width=1em, very thick},
    dot/.style={circle,draw,inner sep=1.2,fill=black},
}
\newcommand{\T}{\mathbb{T}_m}
\renewcommand{\S}{\mathcal{S}}
\newtheorem{teo}{Theorem}[section]
\newtheorem{lem}[teo]{Lemma}
\newtheorem{co}[teo]{Corollary}
\newtheorem{pro}[teo]{Proposition}
\theoremstyle{remark}
\newtheorem{ex}[teo]{Example}
\theoremstyle{definition}
\newtheorem{de}[teo]{Definition}
\def\R{{\mathbb R}}
\def\N{{\mathbb N}}
\title[Quasiconvex functions on regular trees]{Quasiconvex functions on regular trees
}
\author[L. M. Del Pezzo, N. Frevenza and J. D. Rossi]
{Leandro M. Del Pezzo, Nicol{\'a}s Frevenza and Julio D. Rossi}
\address{Leandro M. Del Pezzo and Julio D. Rossi
\hfill\break\indent
CONICET and Departamento  de Matem{\'a}tica, FCEyN,
Universidad de Buenos Aires,
\hfill\break\indent Pabellon I, Ciudad Universitaria (1428),
Buenos Aires, Argentina.}
\email{{\tt ldpezzo@dm.uba.ar, jrossi@dm.uba.ar}}
\address{Nicol{\'a}s Frevenza 
\hfill\break\indent
Instituto de Estad\'{i}stica, FCEA,´
Universidad de la Rep{\'u}blica,
\hfill\break\indent
Gonzalo Ram{\'i}rez 1926, Montevideo, Uruguay.}
\email{{\tt nicolas.frevenza@fcea.edu.uy}}
\begin{document}

\begin{abstract}  
	We introduce a definition of a quasiconvex function on an infinite directed regular tree that depends on what we understand by a segment on the tree. 
	Our definition is based on thinking on segments as subtrees with the root as the midpoint of the segment and extends a previous notion of quasiconvexity on a tree.
	A convex set in the tree is then a subset such that it 
	contains every midpoint of every segment with terminal nodes in the set. Then, a quasiconvex function is a 
	real map on the tree such that every level set is a convex set.
    For this concept of quasiconvex functions on a tree, we show that given a continuous boundary datum, there 
    exists a unique quasiconvex envelope on the tree, and we characterize the equation that this envelope 
    satisfies. It turns out that this equation is a mean value property that involves a median among values of 
    the function on successors of a given vertex.
    We also relate the quasiconvex envelope of a function defined inside the tree to the solution of an 
    obstacle problem for this characteristic equation.
\end{abstract}

\keywords{Quasiconvex, Envelope, Trees.}
\thanks{Data sharing not applicable to this article as no datasets were generated or analysed during the current study}
\maketitle
\indent 2020 {\it Mathematics Subject Classification.} 05C05, 52A40.

\section{Introduction}

	Our main goal in this paper is to study quasiconvex functions on a regular directed tree.
	Let us start this introduction recalling the well-known definitions 
	of convexity and quasiconvexity in the Euclidean space. 
	A function  $u\colon S\to \mathbb{R}$ defined on a convex subset 
	$S\subset \mathbb{R}^N$ is called convex if for all $x,y\in S$ and 
	any $\lambda \in [0,1],$ we have
	\[
		u(\lambda x+(1-\lambda )y)\leq \lambda u(x)+(1-\lambda )u(y).
	\]
	That is, the value of the function at a point in the segment that joins 
	$x$ and $y$ y is less than or equal to the convex combination between 
	the values at the extrema. 
	An alternative way of stating convexity is to say that $u$ is convex 
	on $S$ if the epigraph of $u$ on $S$ is a convex set on $\mathbb{R}^{N+1}$. 
	We refer to \cite{Vel} for a general reference on convex structures. 

	A notion weaker than convexity is quasiconvexity. A function 
	$u\colon S\to \mathbb{R} $ defined on a convex subset $S$ 
	of the Euclidean space is called quasiconvex 
	if for all $x,y\in S$ and any $\lambda \in [0,1],$ 
	we have
	\[
		u(\lambda x+(1-\lambda )y)\leq \max \left\{ u(x),u(y)\right\}.
	\]
	An alternative geometric way of defining a quasiconvex function 
	$u$ is to require that each sublevel set $S_{\alpha }(u)=\{x\in S\colon u(x)\leq \alpha \}$
	is a convex set. See \cite{10} and citations therein for an overview.

	One problem with convexity is that whether
	or not a function is convex depends on the numbers which the function assigns
	to its level sets, not just on the shape of these level sets. The problem with this is that a
	monotone transformation of a convex function need not be convex. That is,
	if $u$ is convex and $g:\mathbb{R} \mapsto \mathbb{R}$ is increasing then 
	$g\circ u$ may fail to be convex. For instance, $f(x)=x^2$ is convex and 
	$g(x)=\arctan(x)$ is increasing but $g\circ f(x)$ is not convex.  
	However, the weaker condition, quasiconvexity, maintains this quality under monotonic
	transformations. Moreover, every monotonic transformation of a convex function
	is quasiconvex (although it is not true that every quasiconvex function
	can be written as a monotonic transformation of a convex function). 

	Quasiconvex functions have applications in mathematical analysis, 
	optimization, game theory, and economics.
	In nonlinear optimization, quasiconvex programming studies iterative methods 
	that converge to a minimum (if 
	one exists) for quasiconvex functions. 
	Quasiconvex programming is a generalization of convex programming.
	See \cite{Pearce} for an application to queueing theory on industrial organization. 
	In microeconomics, quasiconcave ($-u$ with $u$ quasiconvex) utility functions imply that consumers have 
	convex preferences, that is, the diversification of goods is preferred to the concentration on one of these. 
	Quasiconvex functions are important also in game theory and general equilibrium theory; in particular, in 
	Sion's theorem that asserts when we can interchange an infimum with a supremum, see \cite{Ko,Sion}.
	
	There is also a Partial Differential Equations (PDEs) approach for quasiconvex functions, 
	see \cite{BGJ12a,BGJ12b,BGJ13}. In fact, a function $u$ in the Euclidean space is quasiconvex if and only if it is a viscosity subsolution to 
	\begin{equation}\label{ec-RN}
		L(u) (x) \coloneqq \min_{ \substack{v \colon |v|=1, \\
		\langle v, \nabla u(x) \rangle =0}} 
		\langle D^2 u(x) v, v \rangle =0.
	\end{equation}
	Moreover, the quasiconvex envelope of a boundary datum inside a domain 
	is a solution to \eqref{ec-RN} and the quasiconvex envelope of a given function $g$ 
	inside the domain (defined as the largest quasiconvex function
	that is below $g$ in the domain) is the solution to the obstacle problem 
	(from above) for the operator $L$.

	When one wants to expand the notion of convexity or quasiconvexity to an ambient space beyond
	the Euclidean setting the key is to introduce what is a segment
	in our space and, once this is done, to understand what a midpoint in
the segment is.
	For extensions of convexity for graphs and lattices we refer 
	to \cite{Bapat-et-al,Ca,DPFRconvex,FaJa1,FaJa2,Fa,Mu1,Mu2,Mu3,Pel} and references therein.
	
	Here we want to set the ambient space to be a regular tree with $m-$branching that we will denote by $\T$.	
	This refers to a graph with a unique root and such that every node $x$ is connected with 
	$m+1$ nodes, it has exactly $m$ successors (we denote by $\S(x)$ the set of successors) 
	and only one ancestor (except the root that has only $m$ successors), 
	see the precise definition in the next section.

	In \cite{Bapat-et-al} a concept of convexity and quasiconvexity was introduced for a non-directed tree. 
In a finite tree, the authors proved that certain operations with convex and quasiconvex functions preserve the convex or quasiconvex structure. 
They also showed that certain functions of importance in the case of finite trees are convex or quasiconvex.
The regular tree is a discrete metric space where some techniques with analogies to the PDEs approach have been developed.
Recently, in \cite{DPFRconvex} the notion of convexity on $\T$ was extended as follows: 
	fix $k\in \{1,\dots,m\}$ and let $\mathbb{T}_{k}^{x}$ denote the collection
	of finite subgraphs of $\T$ with a root at $x$ and $k-$branching (every node that is not a terminal node
	has exactly $k$ successors). 
	For $\mathbb{B}\in \mathbb{T}_{k}^{x}$ we denote by $\mathcal{E}(\mathbb{B})$ the set of terminal nodes
	of $\mathbb{B}$. Then, a function $u:\T \to \mathbb{R}$ is called $k-$ary
	convex or $k-$convex if
    for any $x\in\T$
    \begin{equation}\label{convex.II}
        u(x) \leq \sum_{y\in\mathcal{E}(\mathbb{B})}\dfrac1{k^{|y|-|x|}} u(y),
        \qquad \forall \mathbb{B}\in\mathbb{T}_{k}^{x}.
    \end{equation}
    In this notion of convexity, the subtree $\mathbb{B}$ has the role of a segment; the midpoint is the root of $\mathbb{B}$, and the $k-$convexity property just says that the value of the 
    function $u$ at the midpoint is less than or equal to a weighted average of the values of $u$ at the endpoints.
It should be noted that the meaning of segment depends on $k$ and admit more than two endpoints.
    For $k=2$, this definition recovers the convex notion of \cite{Bapat-et-al}.

	Here, based on the previously mentioned idea of a segment and a midpoint in the tree $\T$, 
	we introduce a definition of a $k-$quasiconvex function on $\T$. 
	A $k-$convex set in the tree $C\subset \T$ is a subset that contains every midpoint of every segment with 
	terminal nodes in the set, that is, $C\subset \T$ is $k-$convex 
	if for every $\mathbb{B}\in\mathbb{T}_{k}^{x}$ with $\mathcal{E}(\mathbb{B})\subset C$ 
	we have that $x \in C$. Then, the natural definition for $k-$quasiconvexity runs as follows: 
	a function on the tree $u$ is $k-$quasiconvex if every sublevel set 
	$S_{\alpha }(u)=\{x\colon u(x)\leq \alpha \}$ is a $k-$convex set in $\T$. 

	First, we prove a characterization of being $k-$quasiconvex in terms of an inequality involving only the 
	values of $u$ at the successors of $x$, that is, as a local property. 
	A function $u$ is $k-$quasiconvex on the tree if and only if for every vertex $x \in \T$ it holds that
	\begin{equation}
		\label{def-quasiconvex-equiv.intro.99}
		u(x) \leq 
			\min_{\substack{y_1,\dots,y_k \in \S(x) \\ y_i \neq y_j}}\,
			 \max_{i=1,\dots,k} \left\{u(y_i)\right\}.
	\end{equation}
	Notice that the right side of~\eqref{def-quasiconvex-equiv.intro.99} is the $k-$th smallest value among all the values of $u$ at the set of successors of $x$, $\S(x)$.
	This characterization shows that the definition of quasiconvexity of \cite{Bapat-et-al} is equivalent to the 2-quasiconvexity introduced here.

    For this notion of $k-$quasiconvexity on a tree we show that given a 
    boundary datum $f$ on the boundary of the tree, there exists a unique $k-$quasiconvex envelope in $\T$ 
    (this $k-$quasiconvex envelope is defined as the supremum of $k-$quasiconvex functions that are below $f$ 
    on the boundary of the tree) and we characterize the equation that this envelope 
    satisfies: the $k-$quasiconvex envelope $u_f^*$ is the largest solution to 
    \begin{equation}
		\label{def-quasiconvex-equiv.intro}
		u(x) = \min_{\substack{y_1,\dots,y_k \in \S(x) \\ y_i \neq y_j}}\,
		 \max_{i=1,\dots,k} \left\{u(y_i)\right\}
	\end{equation}
	that is below $f$ on $\partial \T$. 
	Notice that here we have saturated the inequality \eqref{def-quasiconvex-equiv.intro.99}.
	For a bounded boundary datum $f,$ we prove existence and uniqueness for solutions to the problem 
	\eqref{def-quasiconvex-equiv.intro} and in the case where $f$ is continuous we show that the 
	solution of \eqref{def-quasiconvex-equiv.intro} attains the datum with continuity.  
	We also establish an analogy between this equation \eqref{def-quasiconvex-equiv.intro} and the associated equation \eqref{ec-RN} for the quasiconvexity in the Euclidean setting in Section \ref{sect-PDEs}.

	It turns out that this equation \eqref{def-quasiconvex-equiv.intro} is a mean value property that involves 
	the $k-$th order statistic of the values of the function on the successors of a given vertex.
	In the particular case of the $m-$branching directed tree with $m$ odd and $k=\frac{m-1}{2}$, 
	the equation~\eqref{def-quasiconvex-equiv.intro} is given by the median operator, 
	that is, the $k-$quasiconvex envelope is the largest solution to
	\begin{equation} \label{median.intro}
			u (x)  = \operatorname{median}
			\left\{ u(y) \colon y\in  \S(x) \right\}
			\quad\text{ for } x\in\T.
		\end{equation}
		In the special cases $k=1$ or $k=m$, the equation \eqref{def-quasiconvex-equiv.intro} reduces to
		\begin{equation} \label{special.cases.intro}
		\begin{array}{l}
			\displaystyle u (x)  =  \min_{y\in \S(x) } \left\{u(y)\right\}
			\quad\text{ for } k=1, \\[10pt]
			\displaystyle u (x)  =  
		 \max_{y \in \S(x)} \left\{u(y)\right\}
			\quad\text{ for } k=m.
			\end{array}
		\end{equation}
	Here, we concentrate on the more interesting case $k\in \{2,...,m-1\}$.
    
    We also relate the $k-$quasiconvex envelope of a function $g:\T \to \mathbb{R}$ 
    defined inside the tree to the solution of an obstacle problem
    for this characteristic equation \eqref{def-quasiconvex-equiv.intro}.
    
    
    \medskip
    
    The paper is organized as follows: in the next section we describe precisely
    the ambient space to be the regular tree with $m-$branching, set the notations that we are going to use
    and state the results; while in Section \ref{sect-proof} we gather the proofs;
     in Section \ref{sect-examples} we include as an example some computations and remarks
    showing that the quasiconvex envelope is easy to compute when the boundary datum $f$ is monotone;
    finally, in Section \ref{sect-PDEs} we look at the equation for the $k-$quasiconvex envelope, 
    \eqref{def-quasiconvex-equiv.intro} 
    in the special case $k=2$
    and compare it
    with the equation for the Euclidean case \eqref{ec-RN}.

\section{Settings, notations and statements}	
	Given $m\in\mathbb{N}_{\ge2},$ a tree $\T$ with regular $m-$branching is an infinite directed 
	graph with vertex set defined by the empty set $\emptyset$, called the root, and all finite 
	sequences $(a_1,a_2,\dots,a_l)$ with $l\in\N,$ whose coordinates $a_i$ are chosen from $\{0,1,\dots,m-1\}.$     
    \begin{center}
        \pgfkeys{/pgf/inner sep=0.19em}
        \begin{forest}
            [$\emptyset$,
                [0
                    [0
                        [0 [,edge=dotted]]
                        [1 [,edge=dotted]]
                        [2 [,edge=dotted]]
                    ]
                    [1
                        [0 [,edge=dotted]]
                        [1 [,edge=dotted]]
                        [2 [,edge=dotted]]
                    ]
                    [2
                        [0 [,edge=dotted]]
                        [1 [,edge=dotted]]
                        [2 [,edge=dotted]]
                    ]
                ]
                [1
                    [0
                        [0 [,edge=dotted]]
                        [1 [,edge=dotted]]
                        [2 [,edge=dotted]]
                    ]
                    [1
                        [0 [,edge=dotted]]
                        [1 [,edge=dotted]]
                        [2 [,edge=dotted]]
                    ]
                    [2
                        [0 [,edge=dotted]]
                        [1 [,edge=dotted]]
                        [2 [,edge=dotted]]
                    ]
                ]
                [2
                    [0
                        [0 [,edge=dotted]]
                        [1 [,edge=dotted]]
                        [2 [,edge=dotted]]
                    ]
                    [1
                        [0 [,edge=dotted]]
                        [1 [,edge=dotted]]
                        [2 [,edge=dotted]]
                    ]
                    [2
                        [0 [,edge=dotted]]
                        [1 [,edge=dotted]]
                        [2 [,edge=dotted]]
                    ]
                ]    
            ]
        \end{forest}
        
       A regular tree with $3-$branching.
    \end{center}
	The edge structure is defined as follows: each vertex $x$ has $m$ successors, obtained by adding 
    another coordinate to $x$. 
    We denote by 
    \[
        \S(x)\coloneqq\{(x,i)\colon i\in\{0,1,\dots,m-1\}\}
    \]
    the set of successors of the vertex $x.$ 
    If $x$ is not the root, then $x$ has a only an 
    immediate predecessor, which is indicated by $\hat{x}.$
	A vertex $x\in\T$ has level $l\in\mathbb{N}$ if $x=(a_1,a_2,\dots,a_l)$.   
    The level of $x$ is denoted by $|x|.$

    A branch of $\T$ is an infinite sequence of vertices, where each one of them is 
    followed by one of its immediate successors.
    The collection of all branches defines the boundary of $\T$, denoted 
    by $\partial\T$.
    Note that the function $\psi:\partial\T\to[0,1]$ defined as
    \[
        \psi(\pi)\coloneqq\sum_{j=1}^{+\infty} \frac{a_j}{m^{j}}
    \]
    is surjective, where $\pi=(a_1,\dots, a_j,\dots)\in\partial\T$ and
    $a_j\in\{0,1,\dots,m-1\}$ for all $j\in\mathbb{N}.$ 
    Whenever $x=(a_1,\dots,a_j)\in\T$ is a vertex, we set
    \[
        \psi(x)\coloneqq\psi(a_1,\dots,a_j,0,\dots,0,\dots).
    \]
    Each vertex $x$ has associated an interval $I_x$ of length $\tfrac{1}{m^{|x|}}$ as follows
    \[
        I_x\coloneqq\left[\psi(x),\psi(x)+\frac1{m^{|x|}}\right].
    \]
    Observe that for all $x\in \T$, $I_x \cap \partial\T$ is the subset of $\partial\T$ 
    formed by all branches that pass through $x$.
    Additionally, for any branch $\pi=(a_1,\dots, a_j,\dots)\in\partial\T,$ 
    we can associate the sequence of intervals $\{I_{\pi,j}\}$ given by
    \[
        I_{\pi,j}\coloneqq I_{x_j}\quad \text{with }x_j=(a_1,\dots,a_j) \text{ for all } j.
    \]
    It is easy to see that $I_{\pi,j+1}\subset I_{\pi,j}$ and $\psi(\pi)\in I_{\pi,j}$ for all $j.$

\subsection{Quasiconvexity for directed regular trees} 
First, let us recall the definition of a $k-$convex set inside the tree. 
Fix $k\in \{2,\dots,m-1\}$.
We denote by $\mathbb{T}_{k}^{x}$ the collection of finite directed subgraphs of $\T$ with $x$ 
as root and $k-$branching and for
$\mathbb{B}\in \mathbb{T}_{k}^{x}$ we write $\mathcal{E}(\mathbb{B})$ for the set of terminal nodes of $\mathbb{B}$. 
		
		\begin{de} \label{def.convex.set}
	A set $C\subset \T$ is $k-$convex if for every $\mathbb{B}\in\mathbb{T}_{k}^{x}$ 
	with $\mathcal{E}(\mathbb{B})\subset C$ we have that $x \in C$. 
	\end{de}
	
	Then, the definition of a quasiconvex function runs as follows.
	\begin{de} \label{def.quasiconvex}
		A function on the tree $u:\T \mapsto \mathbb{R}$ is called $k-$quasiconvex 
		if every sublevel set $S_{\alpha }(u)=\{x\colon u(x)\leq \alpha \}$ is a $k-$convex set in $\T$.
	\end{de}

		We can characterize quasiconvexity by an inequality involving only the values of 
	$u$ at the node and its successors. 
	This characterization will be used to argue why the previous definitions of convexity and quasiconvexity seem ``natural'', establishing an analogy with the Euclidean case, which is described in Section \ref{sect-PDEs}.

	\begin{teo} \label{teo.charac.local}
		A function $u$ is $k-$quasiconvex if and only if 
		\begin{equation}\label{def-quasiconvex-equiv}
			u(x) \leq 
			\min_{\substack{y_1,\dots,y_k \in \S(x) \\ y_i \neq y_j}} \,
			\max_{i=1,\dots,k} \left\{u(y_i)\right\}	\qquad 
			\text{for every }x \in \T.
		\end{equation}
	\end{teo}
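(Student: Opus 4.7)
The plan is to prove the two implications separately, relying on the observation that the right-hand side of~\eqref{def-quasiconvex-equiv} is the $k$-th order statistic of $u$ restricted to $\S(x)$: choosing the $k$-subset of $\S(x)$ whose maximum is smallest amounts to picking the $k$ successors on which $u$ takes the smallest values, and the resulting value is $u$ evaluated at the $k$-th smallest among them.

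For the ``only if'' direction I would fix $x \in \T$ together with an arbitrary $k$-tuple of distinct successors $y_1, \dots, y_k \in \S(x)$, set $\alpha \defi \max_{i=1,\dots,k} u(y_i)$, and consider the depth-one sub-graph $\mathbb{B}^0 \in \mathbb{T}_k^x$ whose terminal nodes are precisely $\{y_1, \dots, y_k\}$. By construction $\mathcal{E}(\mathbb{B}^0) \subset S_\alpha(u)$, so the $k$-convexity of the sublevel set $S_\alpha(u)$, guaranteed by Definition~\ref{def.quasiconvex}, forces $x \in S_\alpha(u)$, i.e.\ $u(x) \leq \max_i u(y_i)$. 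Taking the minimum over all admissible $k$-tuples $\{y_1, \dots, y_k\} \subset \S(x)$ yields~\eqref{def-quasiconvex-equiv}.

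For the ``if'' direction I would fix $\alpha \in \R$ and show that $S_\alpha(u)$ is $k$-convex by induction on the depth of $\mathbb{B} \in \mathbb{T}_k^x$ among those with $\mathcal{E}(\mathbb{B}) \subset S_\alpha(u)$. In the base case $\mathbb{B}$ has depth one, its terminal nodes are $k$ distinct successors $y_1, \dots, y_k$ of $x$, and applying~\eqref{def-quasiconvex-equiv} with exactly this $k$-tuple immediately gives $u(x) \leq \max_i u(y_i) \leq \alpha$. For larger depth I would decompose $\mathbb{B}$ into its root $x$ together with the $k$ sub-graphs $\mathbb{B}_1, \dots, \mathbb{B}_k$ rooted at the successors $y_1, \dots, y_k$ of $x$ in $\mathbb{B}$; each $\mathbb{B}_i \in \mathbb{T}_k^{y_i}$ has strictly smaller depth and $\mathcal{E}(\mathbb{B}_i) \subset \mathcal{E}(\mathbb{B}) \subset S_\alpha(u)$. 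The inductive hypothesis places every $y_i$ in $S_\alpha(u)$, and the depth-one argument above then places $x$ in $S_\alpha(u)$.

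I do not expect any genuinely difficult step: the real content is the observation that the global, multi-scale condition in Definition~\ref{def.quasiconvex} can be reduced to a one-step condition, which holds because arbitrary $\mathbb{B} \in \mathbb{T}_k^x$ can be peeled layer by layer, propagating membership in $S_\alpha(u)$ from the terminal nodes up to the root. The only mild bookkeeping point is, in the base case, ensuring that the particular $k$-tuple of successors forming $\mathcal{E}(\mathbb{B})$ is an admissible choice in the minimum on the right-hand side of~\eqref{def-quasiconvex-equiv}, so that the minimum can be replaced by the value at that specific tuple.
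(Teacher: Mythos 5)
Your argument is correct and follows essentially the same route as the paper: the ``only if'' direction uses depth-one sub-trees $\mathbb{B}\in\mathbb{T}_k^x$ exactly as in the paper's Proposition~\ref{equiv.2}, and your induction on the depth of $\mathbb{B}$ is just an explicit version of the paper's ``iterate the inequality'' step, merely phrased via membership in $S_\alpha(u)$ rather than through the intermediate characterization $u(x)\leq\max_{y\in\mathcal{E}(\mathbb{B})}u(y)$ of Proposition~\ref{equiv.1}. The only cosmetic point worth noting is the degenerate case in your inductive step where some successor $y_i$ is itself a terminal node of $\mathbb{B}$, in which case $y_i\in\mathcal{E}(\mathbb{B})\subset S_\alpha(u)$ directly and no inductive hypothesis is needed.
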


	Notice that the right side of~\eqref{def-quasiconvex-equiv} is the 
	$k-$th smallest value among all the values of $u$ in the successors of $x$.

	\subsection{The quasiconvex envelope of a boundary datum} 
		We are interested in the \emph{$k-$quasi\break~convex envelope} of a function defined on $\partial \T$.
		Given $f\colon[0,1]\to\mathbb{R},$ the $k-$quasiconvex envelope of $f$ on $\T$ is defined as follows
		\begin{equation} 
			\label{quasiconvex-envelope-arbol}
				u^*_f (x) \coloneqq 
				\sup \left\{u(x) \colon u\in\mathcal{QC}_k(f) \right\},
			\end{equation}
		where
		\begin{equation}
		    \mathcal{QC}_k(f)\coloneqq
		    \left\{ u\colon\T\to\mathbb{R}\colon 
				u \text{ is $k-$\emph{quasiconvex} and } 
				\limsup_{x\to \pi\in \partial \T} u(x)\leq f(\psi(\pi)) 
				\quad \forall \pi\in \partial \T
			\right\}.
	\end{equation}
	
	The $k-$quasiconvex envelope is unique by the fact that the maximum of two $k-$quasiconvex functions is also $k-$quasiconvex. 
	In the next theorem we characterize the $k-$quasiconvex envelope as the largest solution of the nonlinear 
	inequality \eqref{def-quasiconvex-equiv} on $\T$ that is below $f$ on $\partial \T$.	
		
	\begin{teo} \label{teo-quasiconvex} 
		Given a bounded function $f\colon[0,1]\to\mathbb{R}$, its $k-$quasiconvex envelope $u^*_f$ is unique 
		and is given by the largest solution to
		\begin{equation} 
		\label{eq-tree-quasiconvex}
			\begin{cases}
				\displaystyle u (x)  =
				\min_{\substack{y_1,\dots,y_k \in \S(x) \\ y_i \neq y_j}} \, 
				\max_{i=1,\dots,k} \left\{u(y_i)\right\}&\text{ for } x\in\T, \\
				\displaystyle
				u(\pi) \coloneqq \limsup_{x\to \pi} u(x) \leq f(\psi(\pi)) 
				& \text{ for } \pi\in \partial \T
			\end{cases}
		\end{equation}
		
		Moreover, the corresponding boundary value problem for the equation in \eqref{eq-tree-quasiconvex} on $\T$ 
		with a continuous Dirichlet datum $f$ on $\partial \T$ has existence and uniqueness, 
		that is, the $k-$quasiconvex envelope $u^*_f$ reaches $f$ on $\partial \T$ when $f$ is continuous
		in the sense that $ \lim_{x\to \pi} u(x) = f(\psi(\pi))$ for $\pi\in \partial \T$. 
	\end{teo}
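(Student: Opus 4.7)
The plan is to proceed in four steps: (i) verify that the pointwise supremum of $k$-quasiconvex functions is again $k$-quasiconvex, so that $u^*_f\in \mathcal{QC}_k(f)$; (ii) promote the quasiconvex inequality at $u^*_f$ to an equality by a single-vertex perturbation; (iii) construct an explicit lower barrier to obtain boundary continuity when $f$ is continuous; and (iv) deduce uniqueness of the Dirichlet problem through a descent argument. The main obstacle will be (iv); the rest are Perron-type constructions adapted to the tree.

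For (i), using Theorem \ref{teo.charac.local}, for any family $\{u_i\}\subset \mathcal{QC}_k(f)$ and any $k$ distinct successors $y_1,\dots,y_k\in \S(x)$, one has $u_i(x)\le \max_j u_i(y_j)\le \max_j u^*_f(y_j)$; taking the supremum in $i$ and the minimum over $k$-subsets shows $u^*_f$ satisfies \eqref{def-quasiconvex-equiv}. To bound $u^*_f$ above by $\sup f$ I would argue by contradiction: if $u(x_0)\ge \sup f+\delta$ for some $u\in \mathcal{QC}_k(f)$, the quasiconvex inequality at $x_0$ forces at least $m-k+1\ge 2$ successors $y$ with $u(y)\ge \sup f+\delta$; iterating yields an infinite finitely-branching subtree in $\{u\ge \sup f+\delta\}$, and K\"onig's lemma gives a branch $\pi$ along which $\limsup u(x_j)\ge \sup f+\delta>f(\psi(\pi))$, a contradiction. (Lower boundedness of $u^*_f$ is immediate from the constant subsolution $u\equiv \inf f$.) For (ii), suppose $u^*_f(x_0)<M$, where $M$ denotes the right-hand side of \eqref{def-quasiconvex-equiv} at $x_0$; define $\tilde u$ to agree with $u^*_f$ off $x_0$ and equal $M$ at $x_0$. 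Quasiconvexity of $\tilde u$ only needs to be checked at $x_0$ (where the inequality is tight by construction) and at the predecessor $\hat x_0$ (where among $k$-subsets of $\S(\hat x_0)$ only those containing $x_0$ see any change, and their max can only increase, so the overall min-max does not decrease); other vertices are untouched. Since $\tilde u$ differs from $u^*_f$ at a single vertex, the boundary limsup is unchanged, so $\tilde u\in \mathcal{QC}_k(f)$ with $\tilde u(x_0)>u^*_f(x_0)$, contradicting the supremum definition. Conversely, any solution of \eqref{eq-tree-quasiconvex} lies in $\mathcal{QC}_k(f)$ and hence below $u^*_f$, so $u^*_f$ is the largest solution.

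For (iii), when $f$ is continuous I would introduce the lower barrier
\[
    v(x)\coloneqq \inf\bigl\{f(\psi(\pi'))\colon \pi'\in \partial\T\cap I_x\bigr\}.
\]
Because every branch through $x$ passes through exactly one successor, $v(x)=\min_{y\in \S(x)}v(y)$, which is dominated by the min-max of the $v(y)$'s, so $v$ is $k$-quasiconvex; also $v(x)\le f(\psi(\pi'))$ for every branch through $x$, so $v\in \mathcal{QC}_k(f)$ and $v\le u^*_f$. As $x_j\to\pi$ the intervals $I_{x_j}$ shrink to $\{\psi(\pi)\}$, and uniform continuity of $f$ on $[0,1]$ yields $v(x_j)\to f(\psi(\pi))$; together with the built-in limsup bound this gives $\lim_{x\to\pi}u^*_f(x)=f(\psi(\pi))$, establishing existence.

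For uniqueness (step (iv)), let $u_1,u_2$ both solve \eqref{eq-tree-quasiconvex} with $\lim_{x\to\pi}u_i(x)=f(\psi(\pi))$, and set $M=\sup_{\T}(u_1-u_2)$; assume $M>0$. If $M$ is attained at some $x^*$, pick a $k$-subset $S^*\subset \S(x^*)$ realizing the minimum in the equation for $u_2$ at $x^*$, so $u_2(x^*)=\max_{y\in S^*}u_2(y)$. Plugging $S^*$ into the equation for $u_1$ yields $u_1(x^*)\le \max_{y\in S^*}u_1(y)$, so some $y^*\in S^*$ satisfies $u_1(y^*)\ge u_1(x^*)$; combined with $u_2(y^*)\le u_2(x^*)$ this gives $u_1(y^*)-u_2(y^*)\ge M$, hence equality. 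Iterating produces an infinite branch $(x_j^*)$ entirely contained in the set $\{u_1-u_2=M\}$; along it, $u_i(x_j^*)\to f(\psi(\pi^*))$ for the limit point $\pi^*$, forcing $M=0$. If $M$ is not attained, any maximizing sequence cannot remain in the finite set of vertices at any fixed level, so its levels tend to infinity; a diagonal extraction makes the coordinates of $x_n$ agree with those of some $\pi^*\in\partial\T$ in arbitrarily many initial positions, whence $u_i(x_n)\to f(\psi(\pi^*))$ and again $M=0$. By symmetry $u_1=u_2$, so the Dirichlet solution is unique and coincides with $u^*_f$. The propagation of the equality set through the min-max structure of the equation is the technical heart of this step.
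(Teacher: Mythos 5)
Your proposal is correct and reaches all the conclusions of the theorem (largest-solution characterization for bounded $f$, boundary attainment for continuous $f$, uniqueness), but it takes a partly different route from the paper. Your steps (i)--(ii) coincide with the paper's: the supremum of the class is quasiconvex via the local characterization, and the single-vertex bump is exactly the paper's Lemma \ref{Lemma:eq-tree-quasiconvex}. The differences are elsewhere. Where the paper invokes its comparison principle, Theorem \ref{cp-general} (proved by taking a maximizer of $v-u$ of maximal level and a pigeonhole among successors), you use branch-propagation arguments: for the a priori bound $u\le\sup f$ you propagate the set $\{u\ge\sup f+\delta\}$ through at least $m-k+1$ successors and follow a branch to contradict the boundary condition, and for uniqueness you propagate the set where $u_1-u_2$ attains its supremum along a branch, using the $k$-subset realizing the minimum for $u_2$. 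For boundary attainment you use the global barrier $v(x)=\inf_{I_x}f$ (precisely the $k=1$ envelope mentioned in Section \ref{sect-examples}) instead of the paper's two-valued barrier supported on a subtree combined with a contradiction argument; your version is direct and arguably cleaner. What each buys: your route is self-contained (Theorem \ref{cp-general} is never needed for this theorem), while the paper's route yields the comparison between different values of $k$ needed for Corollary \ref{corol.compar} at no extra cost and makes the uniqueness step a one-liner.

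Two caveats. First, in the non-attained case of your uniqueness step you conclude $u_i(x_n)\to f(\psi(\pi^*))$ from the fact that $x_n$ agrees with $\pi^*$ on longer and longer prefixes; this requires reading $\lim_{x\to\pi}u_i(x)$ over the subtree-neighborhoods of $\pi$, since the $x_n$ need not lie on the branch itself. That reading is the one the paper implicitly needs when it asserts in its comparison lemma that the supremum is attained, so it is consistent with the paper; still, if one insists on limits only along branch vertices, the fix is easy and removes the case distinction altogether: start the propagation at any $x^*$ with $u_1(x^*)-u_2(x^*)\ge M-\varepsilon$, note that the difference is non-decreasing along the constructed path (which \emph{is} a branch), and let $\varepsilon\to0$. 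Second, like the paper, you take the bound $\limsup_{x\to\pi}u_f^*(x)\le f(\psi(\pi))$ for granted (your ``built-in limsup bound''); in the continuous case, the only place you use it, it does follow from your own propagation argument localized to the subtree below $x$, which gives $u_f^*(x)\le\sup_{I_x}f$, and it would be worth saying this explicitly.
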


	A natural question is to compare solutions to these solutions for different $k$'s but 
	the same boundary datum. The next comparison principle goes in this direction and the immediate 
	corollary provides an answer for the behavior of the solutions.

	\begin{teo} 
		\label{cp-general} 
			Fix $k,j\in \{2,\dots,m-1\}$ with $k\geq j$. 
			Let $u$ and $v$ satisfy
				\begin{equation}
					\label{cp-ec-interior}
						\displaystyle u(x) \geq 
						\min_{\substack{y_1,\dots,y_k \in \S(x) \\ y_i \neq y_l}}\, 
						\max_{i=1,\dots,k} \left\{u(y_i)\right\} 
						\quad \text{ and } \quad \displaystyle v(x) \leq 
						\min_{\substack{y_1,\dots,y_j \in \S(x) \\ y_i \neq y_l}}\, 
						\max_{i=1,\dots,j} \left\{v(y_i)\right\} 
				\end{equation}
			for every $x\in\T$, together with
		\begin{equation}
			\label{cp-ec-borde}
				\limsup_{x\to \pi} u(x) 
				\geq 
				\liminf_{x\to \pi} v(x) 
				\quad \forall \pi\in \partial \T.
		\end{equation}
		Then, 
		\[
		u(x)\geq v(x)
		\] for all $x\in\T$.
	\end{teo}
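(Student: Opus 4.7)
The plan is to argue by contradiction: assume there exists $x_{0}\in\T$ with $v(x_{0})>u(x_{0})$ and set $c:=v(x_{0})-u(x_{0})>0$; the strategy is to exhibit a single branch along which $v-u$ never shrinks, then play this against the boundary hypothesis \eqref{cp-ec-borde}.

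The key local step is a propagation lemma: \emph{whenever $v(x)>u(x)$, there exists at least one $y\in\S(x)$ with $v(y)\geq v(x)$ and $u(y)\leq u(x)$}. The proof is a pigeonhole on the $m$ successors of $x$. The subsolution hypothesis on $v$ in \eqref{cp-ec-interior} puts $v(x)$ below the $j$-th smallest of the values $\{v(y)\}_{y\in\S(x)}$, so at least $m-j+1$ successors satisfy $v(y)\geq v(x)$. Dually, the supersolution hypothesis on $u$ puts $u(x)$ above the $k$-th smallest of the values $\{u(y)\}_{y\in\S(x)}$, so at least $k$ successors satisfy $u(y)\leq u(x)$. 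This is precisely where $k\geq j$ is used: $(m-j+1)+k\geq m+1$, so the two subsets of $\S(x)$ must intersect, and any $y$ in the intersection is as required. In particular $(v-u)(y)\geq(v-u)(x)$, so the defect does not decrease when we pass from $x$ to $y$.

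Starting from $x_{0}$ and applying the lemma iteratively, I would build a chain $x_{0},x_{1},x_{2},\dots$ with $x_{n+1}\in\S(x_{n})$ and
\[
 v(x_{n})\geq v(x_{0}), \qquad u(x_{n})\leq u(x_{0}), \qquad (v-u)(x_{n})\geq c,
\]
for every $n\geq 0$. By the very definition of the boundary in Section~2, this chain is a branch and determines a unique $\pi\in\partial\T$ with $x_{n}\to\pi$. Passing to the limit along this chain,
\[
 \liminf_{n\to\infty} v(x_{n})\;\geq\; v(x_{0})\;>\;u(x_{0})\;\geq\;\limsup_{n\to\infty} u(x_{n}),
\]
so the boundary value of $v$ along this branch strictly exceeds that of $u$, contradicting \eqref{cp-ec-borde} at $\pi$.

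The main obstacle I anticipate is in the last step: the boundary hypothesis is stated with $\limsup_{x\to\pi}$ and $\liminf_{x\to\pi}$ taken over \emph{all} vertices approaching $\pi$ under the topology induced by $\psi\colon\partial\T\to[0,1]$, whereas the propagation only furnishes control along the single constructed chain. Converting the one-sided bounds $\limsup_{n} u(x_{n})\leq u(x_{0})$ and $\liminf_{n} v(x_{n})\geq v(x_{0})$ into a genuine violation of \eqref{cp-ec-borde} is the delicate point; the pigeonhole argument and the choice of branch do the geometric work, but the final contradiction requires care with how the semicontinuous envelopes at $\pi$ are defined in the tree's boundary topology of Section~2.
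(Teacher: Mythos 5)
Your core step is the same as the paper's: at an arbitrary vertex, the subsolution property of $v$ in \eqref{cp-ec-interior} leaves at most $j-1$ successors with $v(y)<v(x)$, hence at least $m-j+1$ with $v(y)\ge v(x)$, while the supersolution property of $u$ gives at least $k$ successors with $u(y)\le u(x)$, and $k\ge j$ forces the two sets to meet. Where you genuinely differ is the globalization. The paper adds $c>0$ to $u$ to make \eqref{cp-ec-borde} strict, takes a vertex of maximal level where $\sup_{\T}(v-u)$ is attained, and contradicts maximality of the level; you instead start from any vertex $x_0$ with $v(x_0)>u(x_0)$ and propagate the inequalities down an infinite chain, pushing the contradiction to the boundary. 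Your route is arguably more robust: it uses no supremum at all, so it avoids the attainment issue that the paper's proof glosses over (the existence of a maximizer of $v-u$ of maximal level is not justified there), and it needs no perturbation by $c$. Moreover, the obstacle you flag at the end dissolves once the boundary limits in \eqref{cp-ec-borde} are read along the branch, i.e.\ over the vertices of $\pi$: the branch determined by your chain (prepend the path from the root to $x_0$) has, from $x_0$ on, exactly the vertices $x_n$, so $\limsup_{x\to\pi}u(x)=\limsup_n u(x_n)\le u(x_0)<v(x_0)\le\liminf_n v(x_n)=\liminf_{x\to\pi}v(x)$, which is precisely the negation of \eqref{cp-ec-borde} at this $\pi$. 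With that reading your proof is complete and correct.

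You are right, however, that under the liberal reading of $x\to\pi$ (limits taken over all vertices in the subtrees hanging from the branch, or over all $x$ with $\psi(x)\to\psi(\pi)$) the last step cannot be closed --- and no argument could close it, because the statement then fails. Take $m=3$, so $k=j=2$, let $u\equiv 0$, set $v(\emptyset)=1$, give every vertex of value $1$ successors with values $1,1,0$, and every vertex of value $0$ successors with values $0,0,0$. Then $v$ satisfies the subsolution inequality everywhere (the second smallest successor value of a $1$-vertex is $1$), every subtree contains $0$-vertices, so in the liberal sense $\liminf_{x\to\pi}v=0=\limsup_{x\to\pi}u$ for every branch $\pi$, yet $v(\emptyset)=1>0=u(\emptyset)$. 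So the delicacy you identified concerns the interpretation of \eqref{cp-ec-borde} (the branch-wise one is the reading under which the theorem, and the way it is applied later in the paper, makes sense), not the soundness of your construction.
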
	

	As an immediate corollary, we get that the $k-$quasiconvex envelopes for different values of $k$ 
	are ordered. 
	\begin{co} 
		\label{corol.compar}
			Fix $k,j\in \{2,\dots,m-1\}$ with $k\geq j$.
			Let $f,g\colon[0,1]\to \R$ be continuous functions with $f\geq g$, 
			$u$ and $v$ be the unique solutions of the equations
			\begin{equation}
				\label{corol.compar.equations}
					\displaystyle u(x) = 
					\min_{\substack{y_1,\dots,y_k \in \S(x) \\ y_i \neq y_l}} \max_{i=1,\dots,k} 
					\left\{u(y_i)\right\} 
					\quad 
					\text{ and } 
					\quad 
					\displaystyle v(x) = 
					\min_{\substack{y_1,\dots,y_j \in \S(x) \\ y_i \neq y_l}} \max_{i=1,\dots,j} 
					\left\{v(y_i)\right\},
			\end{equation}
			for every $x\in\T$, with $f$ and $g$ as boundary data, respectively.
			Then,
			\[
				u(x)\geq v(x)
			\]
			for all $x\in\T.$
		\end{co}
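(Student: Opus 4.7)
The plan is to invoke Theorem~\ref{cp-general} directly; no further work is required beyond checking that its three hypotheses are met. Since $u$ satisfies the equation in \eqref{corol.compar.equations} with parameter $k$, it trivially satisfies the super-solution inequality
\[
u(x) \geq \min_{\substack{y_1,\dots,y_k \in \S(x) \\ y_i \neq y_l}}\max_{i=1,\dots,k}\{u(y_i)\},
\]
and likewise $v$, solving the equation with parameter $j$, satisfies the sub-solution inequality with $\min\max$ over $j$-subsets. This takes care of \eqref{cp-ec-interior}.

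For the boundary condition \eqref{cp-ec-borde}, I would use the second part of Theorem~\ref{teo-quasiconvex}: since $f$ and $g$ are continuous, the solutions $u$ and $v$ attain their boundary data continuously, so
\[
\lim_{x\to \pi} u(x) = f(\psi(\pi)) \quad \text{and} \quad \lim_{x\to \pi} v(x) = g(\psi(\pi))
\]
for every $\pi \in \partial \T$. In particular the $\limsup$ of $u$ and the $\liminf$ of $v$ at $\pi$ coincide with these limits. Using $f \geq g$ pointwise on $[0,1]$, we obtain
\[
\limsup_{x\to \pi} u(x) = f(\psi(\pi)) \geq g(\psi(\pi)) = \liminf_{x\to \pi} v(x),
\]
which is exactly \eqref{cp-ec-borde}.

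With both hypotheses verified, Theorem~\ref{cp-general} yields $u(x) \geq v(x)$ for every $x \in \T$, which is the desired conclusion. There is no real obstacle to overcome here: the whole point of stating Theorem~\ref{cp-general} in the asymmetric form (allowing $u$ and $v$ to satisfy inequalities with different indices $k$ and $j$) is precisely to make this corollary a one-line consequence, combining order preservation in the boundary data with order preservation in the parameter $k$.
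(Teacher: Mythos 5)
Your proposal is correct and is exactly the argument the paper intends: the corollary is stated as an immediate consequence of Theorem~\ref{cp-general}, with $u$ viewed as a supersolution for $k$, $v$ as a subsolution for $j$, and the boundary hypothesis \eqref{cp-ec-borde} supplied by the continuous attainment of the data from Theorem~\ref{teo-quasiconvex} together with $f\geq g$. Nothing further is needed.
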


\subsection{The quasiconvex envelope of a function inside $\T$}
	We also study the $k-$quasiconvex envelope of a bounded function $g\colon \T \to \R,$ 
	that is, we consider
	\begin{equation}
		\label{solucion-obstaculo}
		u_g^{\star} (x) \coloneqq \sup \left\{u(x) \colon u\in \mathfrak{QC}_k(g)\right\},
	\end{equation}
	where
	\begin{equation}
		\mathfrak{QC}_k(g) \coloneqq \left\{u\colon \T\to \R \colon u 
		\text{ is quasiconvex and } u(x)\leq g(x) \quad \forall x\in\T \right\}.
	\end{equation}
	Observe that $g$ is not necessarily $k-$quasiconvex 
	(when $g$ is $k-$quasiconvex, then we trivially have $u_g^{\star}  \equiv g$).

	The quasiconvex envelope $u_g^{\star} $ is also unique.
	One can characterize $u_g^{\star} $ as the solution to the obstacle problem for the equation
	\eqref{def-quasiconvex-equiv}.
	This property is analogous to the convex envelope on the Euclidean space and the regular tree 
	(see for instance \cite{Ober33,DPFRconvex}).
	A relevant set for this type on envelopes is the coincident set, \emph{i.e.}, 
	the set where the $k-$quasiconvex envelope $u_g^{\star} $ hits the obstacle $g$,
	\[
		CS(g) \coloneqq \left\{x\in\T \colon u_g^{\star} (x) = g(x) \right\}.
	\]

	These aspects are summarized in the next result.

	\begin{teo}
		\label{teo-obstaculo}
			The $k-$quasiconvex envelope $u_g^{\star} $ of a function 
			$g\colon \T \to \R$ that is bounded below is the largest solution to the problem
			\begin{equation}
				\label{obstaculo-ecuacion}
				\begin{cases}
					\displaystyle u (x)  \leq 
					\min_{\substack{y_1,\dots,y_k \in \S(x) \\ y_i \neq y_l}} \max_{i=1,\dots,k}  
					\left\{u(y_i)\right\}  &\text{ for } x\in\T\\
					u(x) \leq g(x) & \text{ for } x \in \T.
				\end{cases}
			\end{equation}
		
		For vertices inside $CS(g)$ the obstacle $g$ verifies the inequality
		\begin{equation}
			\label{coincidence-set}
				\displaystyle g(x)\leq 
				\min_{\substack{y_1,\dots,y_k \in \S(x) \\ y_i \neq y_l}}\, \max_{i=1,\dots,k} 
				\{g(y_i)\},
		\end{equation}
		while outside $CS(g)$ the $k-$quasiconvex envelope $u_g^{\star} $ satisfies the equation 
		\begin{equation}
			\label{fuera-coincidence-set}
			u_g^{\star} (x) = 
			\displaystyle 
			\min_{\substack{y_1,\dots,y_k \in \S(x) \\ y_i \neq y_l}} 
			\,\max_{i=1,\dots,k}  \left\{u_g^{\star}  (y_i)\right\}.
		\end{equation}
	\end{teo}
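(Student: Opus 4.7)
The plan is to combine the local characterization given by Theorem~\ref{teo.charac.local} with a ``bump up'' perturbation argument, analogous to the classical treatment of convex envelopes as obstacle solutions.

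First, I would verify that $\mathfrak{QC}_k(g)$ is nonempty (any constant $c\leq\inf g$ is trivially $k$-quasiconvex since its sublevel sets are either empty or all of $\T$, using that $g$ is bounded below), and show that $u_g^\star$ itself belongs to $\mathfrak{QC}_k(g)$. The key point is that the pointwise supremum of functions satisfying~\eqref{def-quasiconvex-equiv} still satisfies~\eqref{def-quasiconvex-equiv}: for any $y_1,\dots,y_k\in\S(x)$ and any $u\in\mathfrak{QC}_k(g)$ one has $u(x)\leq\max_i u(y_i)\leq\max_i u_g^\star(y_i)$, and taking the sup over $u$ followed by the min over $(y_i)$ gives the required inequality. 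Thus $u_g^\star$ is $k$-quasiconvex by Theorem~\ref{teo.charac.local}, and clearly $u_g^\star\leq g$; so $u_g^\star$ solves~\eqref{obstaculo-ecuacion}. Conversely, any solution $v$ of~\eqref{obstaculo-ecuacion} is $k$-quasiconvex (again by Theorem~\ref{teo.charac.local}) and satisfies $v\leq g$, hence $v\in\mathfrak{QC}_k(g)$ and $v\leq u_g^\star$. This proves $u_g^\star$ is the largest solution, and uniqueness of \emph{the largest} solution is automatic.

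For the coincidence set inequality~\eqref{coincidence-set}, if $x\in CS(g)$ then $g(x)=u_g^\star(x)$, and applying~\eqref{def-quasiconvex-equiv} for $u_g^\star$ together with $u_g^\star(y_i)\leq g(y_i)$ gives
\[
 g(x)=u_g^\star(x)\leq\min_{\substack{y_1,\dots,y_k\in\S(x)\\ y_i\neq y_j}}\max_{i=1,\dots,k}u_g^\star(y_i)\leq\min_{\substack{y_1,\dots,y_k\in\S(x)\\ y_i\neq y_j}}\max_{i=1,\dots,k}g(y_i).
\]

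The main step, and the one I expect to be trickiest, is proving the equality~\eqref{fuera-coincidence-set} outside $CS(g)$. I would argue by contradiction: fix $x\notin CS(g)$, so $u_g^\star(x)<g(x)$, and set $c\coloneqq\min_{y_1,\dots,y_k\in\S(x),\,y_i\neq y_j}\max_i u_g^\star(y_i)$. Suppose $u_g^\star(x)<c$; then $u_g^\star(x)<\min\{c,g(x)\}$. Define $\tilde u$ by $\tilde u(x)\coloneqq\min\{c,g(x)\}$ and $\tilde u(z)\coloneqq u_g^\star(z)$ for $z\neq x$. By construction $\tilde u\leq g$. The local characterization~\eqref{def-quasiconvex-equiv} must then be checked at every vertex: at $x$ it holds because the successor values are unchanged and $\tilde u(x)\leq c$; at the predecessor $\hat x$ (if it exists) and every other vertex $z$, we only possibly raised values among the successors, so the right-hand side of~\eqref{def-quasiconvex-equiv} is not decreased while the left-hand side is the same as for $u_g^\star$. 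Hence $\tilde u\in\mathfrak{QC}_k(g)$ with $\tilde u(x)>u_g^\star(x)$, contradicting the definition of $u_g^\star$. Thus $u_g^\star(x)=c$, which is~\eqref{fuera-coincidence-set}. The delicate point here is simply ensuring the modification at a single vertex does not break the quasiconvexity inequality at any other vertex, which follows from the monotonicity of the right-hand side of~\eqref{def-quasiconvex-equiv} in the successor values.
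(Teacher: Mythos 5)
Your proposal is correct and follows essentially the same route as the paper: the supremum argument via the local characterization of Theorem~\ref{teo.charac.local} to show $u_g^{\star}$ solves and dominates all solutions of \eqref{obstaculo-ecuacion}, the chain of inequalities on $CS(g)$, and a single-vertex ``bump up'' contradiction outside $CS(g)$ (the paper raises $u_g^{\star}(x)$ by a small $\delta>0$, you raise it to $\min\{c,g(x)\}$ — an inessential difference, since both rely on the same monotonicity of the right-hand side in the successor values).
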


	In this setting, we also have a comparison result, analogous to Corollary \ref{corol.compar}.

	\begin{co} \label{corol.compar.interior}
		Fix $k,j\in \{2,\dots,m-1\}$ with $k\geq j$.
		Given $g\colon \T \to \R$  a bounded function, 
		let $u$ and $v$ be the unique quasiconvex envelopes 
		for $k$ and $j$ respectively, with $g$ as interior datum for both cases.
		Then, 
		\[
			u(x)\geq v(x)
		\] 
		for all $x\in\T.$
	\end{co}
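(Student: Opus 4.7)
The plan is to reduce the corollary to a simple monotonicity of the classes $\mathfrak{QC}_k(g)$ in the parameter $k$, using the local characterization of Theorem~\ref{teo.charac.local}. The key observation is that if at a vertex $x$ we order the values $\{u(y):y\in\S(x)\}$ increasingly as $a_1\le a_2\le\cdots\le a_m$, then the quantity
\[
\min_{\substack{y_1,\dots,y_k\in\S(x)\\ y_i\neq y_l}}\,\max_{i=1,\dots,k}\{u(y_i)\}
\]
is precisely $a_k$, the $k$-th order statistic. Since $k\ge j$ implies $a_k\ge a_j$, any $j$-quasiconvex function is automatically $k$-quasiconvex.

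Given this, I would proceed as follows. First, recall that $v$ is the $j$-quasiconvex envelope of $g$, so by Theorem~\ref{teo-obstaculo} it satisfies $v(x)\le g(x)$ for every $x\in\T$ and also the inequality
\[
v(x)\le \min_{\substack{y_1,\dots,y_j\in\S(x)\\ y_i\neq y_l}}\,\max_{i=1,\dots,j}\{v(y_i)\}
\]
everywhere on $\T$. By the order-statistic observation above applied to the values $\{v(y):y\in\S(x)\}$, the right-hand side is bounded above by the analogous expression with $k$ in place of $j$. Hence $v$ also verifies the inequality characterizing $k$-quasiconvexity, so $v\in\mathfrak{QC}_k(g)$.

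Finally, since $u$ is defined in \eqref{solucion-obstaculo} as the supremum of all functions in $\mathfrak{QC}_k(g)$, the inclusion $v\in\mathfrak{QC}_k(g)$ gives $v(x)\le u(x)$ for every $x\in\T$, which is the desired conclusion. There is essentially no obstacle here: the argument is a one-line consequence of the monotonicity $a_j\le a_k$ of order statistics once Theorems~\ref{teo.charac.local} and \ref{teo-obstaculo} are available. The only small point to verify is that the local inequality is what is really used in the definition of $\mathfrak{QC}_k(g)$, which is precisely the content of Theorem~\ref{teo.charac.local}.
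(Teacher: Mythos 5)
Your argument is correct: the identification of the minimized maximum with the $k$-th order statistic $a_k$ of the successor values, the monotonicity $a_j\le a_k$ for $j\le k$, and Theorem~\ref{teo.charac.local} together give the inclusion $\mathfrak{QC}_j(g)\subseteq\mathfrak{QC}_k(g)$; since Theorem~\ref{teo-obstaculo} guarantees that $v$ is itself $j$-quasiconvex and bounded above by $g$, it lies in $\mathfrak{QC}_k(g)$ and is therefore dominated by the supremum in \eqref{solucion-obstaculo}, i.e.\ by $u$. The paper does not write out a proof here; it only remarks that the argument is ``analogous'' to Corollary~\ref{corol.compar}, whose proof runs through the comparison principle of Theorem~\ref{cp-general} (the maximal-level/pigeonhole argument with an ordering assumption on $\partial\T$). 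Your route is genuinely different and, for this interior problem, arguably better adapted: Theorem~\ref{cp-general} as stated needs the supersolution inequality everywhere together with boundary control on $\partial\T$, whereas the envelope $u$ is only known to satisfy the equation outside the coincidence set and no boundary information is available; your class-inclusion argument sidesteps all of that and uses nothing beyond the definition of the envelope as a supremum plus the local characterization. The trade-off is that your argument exploits the specific structure of the obstacle problem (both envelopes share the same obstacle $g$), while the comparison-principle route of Corollary~\ref{corol.compar} also handles distinct, merely ordered data; for the statement at hand your proof is complete and self-contained.
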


\section{Proofs} \label{sect-proof}
	Let us start by proving the following proposition.
	\begin{pro} \label{equiv.1} 
		A function $u\colon \T \mapsto \mathbb{R}$ is $k-$quasiconvex 
		if and only if it holds that
 		\begin{equation}\label{quasi.convex.II}
        	u(x) \leq \max_{y\in\mathcal{E}(\mathbb{B})} u(y),
  		\end{equation}
		for every $x\in \T$ and every $\mathbb{B}\in\mathbb{T}_{k}^{x}.$ 
			\end{pro}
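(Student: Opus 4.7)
The plan is to prove both implications directly from the definitions, using the sublevel sets as the natural bridge between the two formulations.

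For the forward direction, assume $u$ is $k$-quasiconvex, fix $x\in\T$ and $\mathbb{B}\in\mathbb{T}_k^x$, and set
\[
\alpha\coloneqq \max_{y\in\mathcal{E}(\mathbb{B})} u(y).
\]
By construction every $y\in\mathcal{E}(\mathbb{B})$ satisfies $u(y)\leq\alpha$, so $\mathcal{E}(\mathbb{B})\subset S_\alpha(u)$. Since $S_\alpha(u)$ is $k$-convex by Definition \ref{def.quasiconvex}, applying Definition \ref{def.convex.set} with this very sub-graph $\mathbb{B}$ yields $x\in S_\alpha(u)$, that is, $u(x)\leq\alpha=\max_{y\in\mathcal{E}(\mathbb{B})} u(y)$, which is exactly \eqref{quasi.convex.II}.

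For the converse, assume \eqref{quasi.convex.II} holds for every $x$ and every $\mathbb{B}\in\mathbb{T}_k^x$. I want to verify that each sublevel set $S_\alpha(u)$ is $k$-convex. Pick any $\alpha\in\mathbb{R}$, any vertex $x\in\T$ and any $\mathbb{B}\in\mathbb{T}_k^x$ with $\mathcal{E}(\mathbb{B})\subset S_\alpha(u)$. Then $u(y)\leq\alpha$ for every $y\in\mathcal{E}(\mathbb{B})$, hence $\max_{y\in\mathcal{E}(\mathbb{B})} u(y)\leq\alpha$. Combining this with \eqref{quasi.convex.II} gives $u(x)\leq\alpha$, i.e., $x\in S_\alpha(u)$. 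Thus $S_\alpha(u)$ is $k$-convex, and since $\alpha$ was arbitrary, $u$ is $k$-quasiconvex.

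Both directions are essentially one-liners once one chooses the right threshold $\alpha$; there is no real obstacle. The only point worth being a bit careful about is to make sure the sublevel set is defined with a non-strict inequality (as in the paper), so that the $\max$ over $\mathcal{E}(\mathbb{B})$ really does lie inside $S_\alpha(u)$ when taken as the threshold. No tree-specific machinery beyond the definitions of $\mathbb{T}_k^x$ and $\mathcal{E}(\mathbb{B})$ is required.
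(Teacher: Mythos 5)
Your argument is correct and coincides with the paper's own proof: both directions hinge on taking $\alpha=\max_{y\in\mathcal{E}(\mathbb{B})}u(y)$ (respectively an arbitrary $\alpha$) and passing through the sublevel set $S_\alpha(u)$ exactly as in the paper. Nothing is missing.
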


	\begin{proof} 
		First, assume that $u$ is $k-$quasiconvex and take any $x\in \T$ and any 
		$\mathbb{B}\in\mathbb{T}_{k}^{x}$. 
		Then, consider the sublevel set $$S_{\alpha }(u)=\{y\colon u(y)\leq \alpha \}$$
		with $$\alpha = \max\limits_{y\in\mathcal{E}(\mathbb{B})} u(y).$$ 
		This set $S_{\alpha }(u)$ is $k-$convex in $\T$ 
		since $u$ is $k-$quasiconvex, and so, 
		every terminal node in $\mathbb{B}$ belongs to $S_{\alpha }(u)$.
		Hence, we get that $x \in S_{\alpha }(u)$, that is, 
		\[
			u(x) \leq \alpha = \max_{y\in\mathcal{E}(\mathbb{B})} u(y).
		\]
	
		To see the converse, let $u$ be a function such that \eqref{quasi.convex.II} 
		holds and consider a sublevel set 
		$S_{\alpha }(u)=\{y\colon u(y)\leq \alpha \}$. 
		Let $\mathbb{B}$ be a finite subtree with $k-$branching with terminal nodes that belonging to 
		$S_{\alpha }(u)$, that is $\mathcal{E}(\mathbb{B})\subset S_{\alpha }(u)$. 
		Then, from \eqref{quasi.convex.II}, if we denote by $x$ the root of $\mathbb{B}$ we have
		\[
			u(x) \leq  \max_{y\in\mathcal{E}(\mathbb{B})} u(y) \leq \alpha.
		\]
		This shows that $S_{\alpha }(u)$ is a $k-$convex set 
		and proves that $u$ is $k-$quasiconvex since its sublevel sets are convex. 
	\end{proof}

	We can characterize $k-$quasiconvexity by an inequality that involves only the values of $u$ at the 
	successors of the point $x$ (a local property). 

	\begin{pro}  \label{equiv.2}
		A function $u \colon \T\to\R$ is $k-$\emph{quasiconvex} if and only if for every $x \in \T$ 
		and for any $k$ different successors of $x$, $y_1,\dots,y_k$ with $y_i \in \S(x)$, it holds
		\begin{equation}
			\label{def-quasiconvex}
				u(x) \leq \max_{i=1,\dots,k} \left\{u(y_i) \right\}.
		\end{equation}
	\end{pro}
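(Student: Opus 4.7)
My plan is to leverage Proposition \ref{equiv.1}, which already tells us that $k$-quasiconvexity is equivalent to the inequality $u(x)\leq \max_{y\in\mathcal{E}(\mathbb{B})}u(y)$ for every $x\in\T$ and every $\mathbb{B}\in\mathbb{T}_k^x$. Hence it suffices to show that this global inequality, taken over all $\mathbb{B}\in\mathbb{T}_k^x$, is equivalent to the local inequality \eqref{def-quasiconvex} involving only the successors of $x$.

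For the forward implication, suppose $u$ is $k$-quasiconvex. Given $x\in\T$ and $k$ distinct successors $y_1,\dots,y_k\in\S(x)$, I would consider the one-level sub-tree $\mathbb{B}_0\in\mathbb{T}_k^x$ consisting of the root $x$ together with $y_1,\dots,y_k$ as its terminal nodes. Applying Proposition \ref{equiv.1} to $\mathbb{B}_0$ immediately gives $u(x)\leq\max_i u(y_i)$, which is \eqref{def-quasiconvex}.

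For the reverse implication, I would argue by induction on the height (that is, the maximal level difference between the root and a terminal node) of $\mathbb{B}\in\mathbb{T}_k^x$, aiming to establish the global inequality of Proposition \ref{equiv.1}. In the base case, $\mathbb{B}$ has height one, so $x$ has exactly $k$ successors $y_1,\dots,y_k$ as terminal nodes and \eqref{def-quasiconvex} applied at $x$ yields the conclusion. For the inductive step, let $\mathbb{B}$ have height $h\geq 2$ and let $y_1,\dots,y_k$ be the $k$ successors of $x$ inside $\mathbb{B}$. For each $i$, the sub-graph $\mathbb{B}_i\subset\mathbb{B}$ rooted at $y_i$ belongs to $\mathbb{T}_k^{y_i}$, has height at most $h-1$, and satisfies $\mathcal{E}(\mathbb{B}_i)\subset\mathcal{E}(\mathbb{B})$. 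By the inductive hypothesis applied at each $y_i$,
\[
u(y_i)\leq \max_{y\in\mathcal{E}(\mathbb{B}_i)}u(y)\leq \max_{y\in\mathcal{E}(\mathbb{B})}u(y).
\]
Combining this with \eqref{def-quasiconvex} applied at $x$ with successors $y_1,\dots,y_k$ yields
\[
u(x)\leq \max_{i=1,\dots,k}u(y_i)\leq \max_{y\in\mathcal{E}(\mathbb{B})}u(y),
\]
which closes the induction. Proposition \ref{equiv.1} then ensures that $u$ is $k$-quasiconvex.

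The only delicate point in the argument is to make sure, in the inductive step, that the sub-graphs $\mathbb{B}_i$ are themselves $k$-branching finite sub-trees rooted at $y_i$ with terminal nodes contained in $\mathcal{E}(\mathbb{B})$; but this is automatic from the definition of $\mathbb{T}_k^x$, since every non-terminal node of $\mathbb{B}$ has exactly $k$ successors inside $\mathbb{B}$, and terminal nodes of $\mathbb{B}_i$ are either terminal nodes of $\mathbb{B}$ or vertices of $\mathbb{B}$ that have no successors in $\mathbb{B}$ (hence also terminal in $\mathbb{B}$). So there is no real obstacle and the induction goes through cleanly.
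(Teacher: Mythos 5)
Your proposal is correct and takes essentially the same route as the paper: the forward direction uses the one-level sub-tree with $y_1,\dots,y_k$ as terminal nodes, and the reverse direction is exactly the paper's ``iterate the inequality'' step, which you simply spell out rigorously as an induction on the height of $\mathbb{B}$ (including the harmless degenerate case where some $y_i$ is already a terminal node of $\mathbb{B}$).
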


	\begin{proof}
		Assume that $u$ is quasiconvex and choose the subtree $\mathbb{B}$ composed by $x$ 
		as root and any set of $k$ different successors of $x$,  $y_1,...,y_k$, 
		$y_i \in \S(x)$. Then, using Proposition \ref{equiv.1}, we get that 
		\[
 			u(x) \leq \max_{y\in\mathcal{E}(\mathbb{B})} u(y) = \max_{i=1,...,k} \left\{u(y_i)\right\},
		\]
		since the terminal nodes of $\mathbb{B}$ are $y_1,...,y_k$.

		To prove the converse, take any subtree $\mathbb{B}\in\mathbb{T}_{k}^{x}$ and iterate  
		the inequality 
		\[
				u(x) \leq \max_{i=1,...,k} \left\{u(y_i)\right\}
		\]
		to obtain 
		\[
			u(x) \leq \max_{y\in\mathcal{E}(\mathbb{B})} u(y).
		\]
		Then, using again Proposition \ref{equiv.1}, we conclude that $u$ is $k-$quasiconvex.
	\end{proof}

		From our previous result Theorem \ref{teo.charac.local} follows immediately. 

	\begin{proof}[Proof of Theorem \ref{teo.charac.local}]
		The inequality \eqref{def-quasiconvex} is equivalent to 
		\begin{equation}
			\label{def-quasiconvex-equiv.22}
			u(x) \leq 
			\min_{\substack{y_1,\dots,y_k \in \S(x) \\ y_i \neq y_j}}\, \max_{i=1,\dots,k} 
			\left\{u(y_i)\right\}.
		\end{equation}
		Therefore, \eqref{def-quasiconvex-equiv.22} characterizes $k-$quasiconvexity.
	\end{proof}

\subsection{The quasiconvex envelope of a boundary datum}
	We start proving the comparison principle stated in Theorem~\ref{cp-general}.

	\begin{lem} 
		Fix $k,j\in \{2,\dots,m-1\}$ with $k\geq j$. 
		Let $u$ and $v$ satisfy
		\begin{equation}
			\label{lema-cp-ec-interior}
				u(x) \geq 
				\min_{\substack{y_1,\dots,y_k \in \S(x) \\ y_i \neq y_l}} \max_{i=1,\dots,k} 
				\left\{u(y_i)\right\} 
				\quad \text{ and } \quad  
				v(x) \leq \min_{\substack{y_1,\dots,y_j \in \S(x) \\ y_i \neq y_l}} \max_{i=1,\dots,j} 
				\left\{v(y_i)\right\} 
		\end{equation}
		for all $x\in\T$ and
		\begin{equation}
			\label{lema-cp-ec-borde}
				\limsup_{x\to \pi} u(x) 
				\geq 
				\liminf_{x\to \pi} v(x) 
				\quad \forall \pi\in \partial \T.
		\end{equation}
		Then, $u(x)\geq v(x)$ for all $x\in\T$.
	\end{lem}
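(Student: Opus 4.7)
My plan is to argue by contradiction. Suppose there exists $x_0\in\T$ with $v(x_0)>u(x_0)$ and set $d:=v(x_0)-u(x_0)>0$; I will descend inside $\T$ to produce a branch along which this gap persists, then contradict the boundary hypothesis~\eqref{lema-cp-ec-borde}. The combinatorial heart of the argument is a pigeonhole applied at each vertex $x$. The super-solution inequality~\eqref{lema-cp-ec-interior} forces $u(x)$ to be at least the $k$-th smallest value of $u$ on $\S(x)$, so the set
\[
B(x):=\{y\in\S(x)\colon u(y)\le u(x)\}
\]
has $|B(x)|\ge k$. The sub-solution inequality for $v$ analogously gives $|A(x)|\ge m-j+1$, where $A(x):=\{y\in\S(x)\colon v(y)\ge v(x)\}$. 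Since $k\ge j$, we have $|A(x)|+|B(x)|\ge m+1$, so $A(x)\cap B(x)\ne\emptyset$: there is a successor $x^{+}\in\S(x)$ realizing \emph{both} $v(x^{+})\ge v(x)$ and $u(x^{+})\le u(x)$ simultaneously.

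Iterating this selection from $x_0$ yields an infinite descending chain $x_0,x_1,x_2,\ldots$ with $x_{n+1}\in\S(x_n)$ along which $v$ is non-decreasing and $u$ is non-increasing, so in particular $v(x_n)-u(x_n)\ge d$ for every $n$. By construction the chain converges to a branch $\pi\in\partial\T$, and by monotonicity the limits $\alpha:=\lim_n u(x_n)$ and $\beta:=\lim_n v(x_n)$ exist (finite once $u,v$ are bounded) and satisfy $\beta-\alpha\ge d>0$.

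To conclude, note that $x_N=\pi_N$ for every $N$, so every descendant of $\pi_N$ lies in the cone of $x_N$, giving
\[
\sup\{v(y)\colon y\text{ descendant of }\pi_N\}\ge v(x_N)\quad\text{and}\quad\inf\{u(y)\colon y\text{ descendant of }\pi_N\}\le u(x_N).
\]
Letting $N\to\infty$ we obtain $\limsup_{x\to\pi}v(x)\ge\beta$ and $\liminf_{x\to\pi}u(x)\le\alpha$. Feeding these into the boundary hypothesis~\eqref{lema-cp-ec-borde} produces $\alpha\ge\beta$, contradicting $\beta-\alpha>0$. I expect the pigeonhole step to be the most delicate: in the borderline case $k=j$ the intersection $A(x)\cap B(x)$ may contain only a single element, so the argument must guarantee joint monotonicity of the two sequences $u(x_n)$ and $v(x_n)$ along the branch, rather than merely the weaker inequality on their difference—this joint monotonicity is what ultimately powers the final boundary comparison.
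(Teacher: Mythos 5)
Your combinatorial core is exactly the paper's: the inequalities \eqref{lema-cp-ec-interior} give $|B(x)|\ge k$ and $|A(x)|\ge m-j+1$, and $k\ge j$ forces $A(x)\cap B(x)\neq\emptyset$; the paper applies this pigeonhole once at a maximizer of $v-u$ of maximal level (after adding $c>0$ to $u$ to make \eqref{lema-cp-ec-borde} strict), whereas you iterate it to build a chain $x_{n+1}\in\S(x_n)$ along which $u$ is non-increasing and $v$ is non-decreasing. That iteration is a legitimate, even cleaner, variant since it avoids the attainment-of-the-supremum issue. The flaw is in your final boundary step, and it is a genuine non sequitur as written: from the descendant cones you only obtain $\limsup_{x\to\pi}v(x)\ge\beta$ and $\liminf_{x\to\pi}u(x)\le\alpha$, but the hypothesis \eqref{lema-cp-ec-borde} bounds $\limsup_{x\to\pi}u(x)$ from below by $\liminf_{x\to\pi}v(x)$; an upper bound on $\liminf u$ and a lower bound on $\limsup v$ cannot be combined with it, so ``$\alpha\ge\beta$'' does not follow. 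In effect you have proved the lemma under the stronger boundary hypothesis $\liminf_{x\to\pi}u\ge\limsup_{x\to\pi}v$.

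The repair is already inside your construction, and it is precisely where the ``joint monotonicity'' you flag must be cashed in: with $x\to\pi$ understood as convergence along the branch determined by the chain (which is how the paper uses it, e.g.\ when showing the envelope attains a continuous datum along $x_j=\pi_j$), monotonicity gives the full limits $\limsup_{x\to\pi}u(x)=\lim_n u(x_n)=\alpha$ and $\liminf_{x\to\pi}v(x)=\lim_n v(x_n)=\beta$, so \eqref{lema-cp-ec-borde} yields $\alpha\ge\beta$, contradicting $\beta\ge v(x_0)>u(x_0)\ge\alpha$. Do not pass through suprema/infima over descendant cones: those bounds are too weak for the stated hypothesis, and your chain controls $u$ and $v$ only along itself, not on a whole cone around the branch. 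Two minor points: boundedness of $u,v$ is not assumed and is not needed (the monotone limits exist in $[-\infty,+\infty]$ and the contradiction persists); and the parenthetical worry about the case $k=j$ is harmless, since a single common successor at each step is all the construction requires.
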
	

	\begin{proof}
		Adding $c>0$ to $u$ we may assume that~\eqref{lema-cp-ec-borde} is strict. Assume that
		\[
			M = \sup_{x\in\T} \left(v(x) - u(x)\right) > 0.
		\]
		By the strict version of \eqref{lema-cp-ec-borde} the supremum 
		is a maximum and it is attained inside $\T$.  Let $x_0$ be a vertex with maximal level 
		$|x_0|$ and such that $v(x_0) - u(x_0) = M$.

		Now, by the equation \eqref{lema-cp-ec-interior} for $v$ we have that $v(x_0)$ is smaller 
		or equal to $v$ evaluated in $j$ nodes of $\S(x_0)$, so, there exists at least $m-j+1$ 
		successors of $x_0$ such that 
		\[
			v(y) \geq v(x_0).
		\]
		On the other hand, using \eqref{lema-cp-ec-interior} for $u$, 
		we deduce that for at least $k$ successors of $x_0$ it holds that
		\[ 
			u(y) \leq u(x_0).
		\]
		Since $k\geq j$, by the pigeonhole principle, there is some 
		$y\in\S(x_0)$ where both inequalities are satisfied.
		Then, at this particular $y\in\S(x_0)$ we have
		\[
			v(y) - u(y) \geq v(x_0) - u(x_0) = M,
		\]
		but this contradicts the assumption of maximal level for $x_0$. 
		So, we conclude that 
		\[
			u(x)\geq v(x)
		\] 
		as we wanted to show.
	\end{proof}

	The proof of Theorem \ref{teo-quasiconvex} is split into two lemmas 
	and an easy application of the comparison principle. In the first lemma, we show that $u_f^*$ 
	is well-defined, $k-$quasiconvex and reaches $f$ on $\partial \T$ when $f$ is continuous.
	Then, we prove that $u_f^*$ is the largest solution of~\eqref{eq-tree-quasiconvex}. 
	The uniqueness of the solution for the equation~\eqref{eq-tree-quasiconvex} is deduced from the comparison 
	principle Theorem \ref{cp-general}. 

	\begin{lem} 
		Let $f\colon[0,1]\to\mathbb{R}$ be a bounded function
	 	and let $u^*_f$ be given by
		\begin{equation} 
			u^*_f (x) \coloneqq 
			\sup \left\{u(x) \colon u\in\mathcal{QC}_k(f) \right\},
		\end{equation}
		where
		\begin{equation}
	    \mathcal{QC}_k(f)\coloneqq
	    \left\{ u\colon\T\to\mathbb{R}\colon 
			u \text{ is $k-$\emph{quasiconvex} and } 
			\limsup_{x\to \pi\in \partial \T} u(x)\leq f(\psi(\pi)) 
		\right\}.
		\end{equation}
		Then, $u_f^*$ is well-defined, unique, $k-$quasiconvex 
		and it is below $f$ at the boundary, i.e.,
		\begin{equation}
			\label{pepe}
			u_f^*(\pi) \coloneqq \limsup_{x\to \pi} u_f^*(x) \leq f(\psi(\pi))
		\end{equation}
		for every $\pi \in \partial \T$.
		Moreover, when $f$ is continuous $u_f^*$ reaches $f$ at the boundary.
	\end{lem}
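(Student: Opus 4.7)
My plan proceeds in three main steps, in the spirit of a Perron construction.

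\textbf{Well-definedness and $k$-quasiconvexity.} The constant function equal to $\inf f$ belongs to $\mathcal{QC}_k(f)$, so this family is non-empty. To see that $u_f^*$ is finite, I would invoke Proposition \ref{equiv.2}: for any $u\in\mathcal{QC}_k(f)$ and any $x\in\T$, among the $m$ successors of $x$ at most $k-1$ can have value strictly less than $u(x)$, so at least $m-k+1\ge 2$ satisfy $u(y)\ge u(x)$. Iterating this selection builds an infinite branch $\pi$ through $x$ along which $u\ge u(x)$, whence
\[
u(x)\le \limsup_{y\to\pi} u(y)\le f(\psi(\pi))\le \sup f.
\]
Thus $u_f^*\le\sup f$ is well-defined; uniqueness as a function is immediate from its pointwise supremum definition. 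For the $k$-quasiconvexity of $u_f^*$, I apply Proposition \ref{equiv.2} once more: for every $u\in\mathcal{QC}_k(f)$ and every choice of $k$ distinct successors $y_1,\dots,y_k$ of $x$, $u(x)\le \max_i u(y_i)\le \max_i u_f^*(y_i)$, and taking the supremum over $u$ on the left gives $u_f^*(x)\le \max_i u_f^*(y_i)$.

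\textbf{Boundary upper bound \eqref{pepe}.} Fix $\pi^*\in\partial\T$ and $\epsilon>0$. Sharpening the branch-building argument: the constructed branch $\pi'$ passes through $x$, so $\psi(\pi')\in I_x$, and thus for every $u\in\mathcal{QC}_k(f)$,
\[
u(x)\le \sup_{t\in I_x} f(t).
\]
Taking the supremum over $u$ yields $u_f^*(x)\le \sup_{t\in I_x} f(t)$. As $x$ moves along $\pi^*$ to higher levels the interval $I_x$ shrinks to $\{\psi(\pi^*)\}$, so upper semicontinuity of $f$ at $\psi(\pi^*)$ (which is implied by continuity and is what is really needed here) gives $\sup_{t\in I_x}f(t)\le f(\psi(\pi^*))+\epsilon$ for all sufficiently deep $x$ on $\pi^*$. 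Hence $\limsup_{x\to\pi^*}u_f^*(x)\le f(\psi(\pi^*))+\epsilon$, and $\epsilon\to 0$ concludes.

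\textbf{Reaching $f$ in the continuous case.} It remains to show $\liminf_{x\to\pi^*}u_f^*(x)\ge f(\psi(\pi^*))$. Given $\epsilon>0$, continuity of $f$ furnishes an $N$ with $|f(t)-f(\psi(\pi^*))|<\epsilon$ for every $t\in I_{x_N}$, where $x_N$ is the vertex of level $N$ on $\pi^*$. Define the barrier
\[
v(x)\coloneqq \begin{cases} f(\psi(\pi^*))-\epsilon & \text{if $x$ lies in the sub-tree rooted at $x_N$,}\\ \inf f-1 & \text{otherwise.}\end{cases}
\]
Since $k\in\{2,\dots,m-1\}$, at any strict ancestor of $x_N$ one can choose $k$ successors avoiding the unique one lying in the sub-tree rooted at $x_N$; the minimum over $k$-tuples of $\max_i v(y_i)$ then equals $\inf f-1$ and matches $v$ at that ancestor. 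At all other vertices the successors share the same value of $v$. Hence $v$ is $k$-quasiconvex. On branches through $x_N$, $v\equiv f(\psi(\pi^*))-\epsilon<f(\psi(\pi'))$ by the choice of $N$, while on other branches $v\equiv \inf f-1<f(\psi(\pi'))$. Therefore $v\in\mathcal{QC}_k(f)$, so $u_f^*\ge v$ everywhere, giving $u_f^*(x)\ge f(\psi(\pi^*))-\epsilon$ on the whole sub-tree of $x_N$. Letting $\epsilon\to 0$ delivers the desired lower limit.

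The main obstacle is the boundary upper bound, where one must pass a pointwise estimate through the supremum defining $u_f^*$; the clean device is the branch-building trick that turns $k$-quasiconvexity plus the boundary condition into the uniform estimate $u(x)\le \sup_{t\in I_x} f(t)$. The lower bound is by contrast a direct step-function barrier whose $k$-quasiconvexity crucially exploits the restriction $k\in\{2,\dots,m-1\}$, which allows one to discard the single "upward-jumping" successor when selecting $k$-tuples.
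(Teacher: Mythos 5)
Your argument is correct, and it overlaps with the paper's proof in its two main constructions: the $k$-quasiconvexity of $u^*_f$ obtained by passing the local inequality of Proposition \ref{equiv.2} through the supremum, and the attainment of continuous data via a two-valued barrier supported on the subtree rooted at a deep vertex of the branch (the paper's barrier takes the values $\min_{I_{\pi,j}}f$ and $\min_{[0,1]}f$ and argues by contradiction; yours takes $f(\psi(\pi^*))-\epsilon$ and $\inf f-1$ and bounds the liminf directly -- the same device). Where you genuinely differ is the uniform bound: the paper invokes the comparison principle (Theorem \ref{cp-general}) to get $u\le\sup f$ for every $u\in\mathcal{QC}_k(f)$, while your branch-building observation -- at each vertex at least $m-k+1\ge2$ successors satisfy $u(y)\ge u(x)$, so one can follow a branch $\pi'$ through $x$ along which $u\ge u(x)$ and conclude $u(x)\le f(\psi(\pi'))$ -- is self-contained and yields the sharper local estimate $u^*_f(x)\le\sup_{t\in I_x}f(t)$, which the paper never records and which is exactly what feeds your boundary estimate.

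The only mismatch with the statement is \eqref{pepe}: you prove it assuming upper semicontinuity of $f$ at $\psi(\pi)$, whereas the lemma asserts it for every bounded $f$. Your caveat that usc ``is what is really needed'' is in fact accurate, and the deficiency here is the paper's rather than yours: the paper justifies \eqref{pepe} only by ``the definition of $u_f^*$ as the supremum'', which silently interchanges a $\limsup$ with a supremum over the family, and the unrestricted claim can fail for discontinuous bounded data. For instance, take $m=3$, $k=2$, $f(0)=0$ and $f\equiv1$ on $(0,1]$, and let $\pi_0=(0,0,\dots)$ with vertices $x_j$. For each $J$, the function equal to $0$ on the subtree rooted at $x_{J+1}$ and to $1$ elsewhere is $2$-quasiconvex (the only nontrivial vertex is $x_J$, whose successor values are $0,1,1$, with second smallest value $1$), its limsup along $\pi_0$ is $0$ and along every other branch is at most $1=f(\psi(\pi))$, so it belongs to $\mathcal{QC}_2(f)$; yet it equals $1$ at $x_J$. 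Hence $u^*_f(x_j)=1$ for every $j$ and $\limsup_{x\to\pi_0}u^*_f(x)=1>0=f(\psi(\pi_0))$. So your proof establishes everything in the lemma that is actually true and everything that is used later (the continuous case); for merely bounded $f$, \eqref{pepe} should carry an upper semicontinuity proviso, and no argument -- including the paper's -- can remove it.
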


	\begin{proof}
		Note that $\mathcal{QC}_k(f)\neq \emptyset$. 
		Indeed, the constant function $u$ defined by 
		\[
			u(x) = \inf \{f(y) \colon y\in [0,1]\},
		\]
		is $k-$quasiconvex and bounded by $f$ on $\partial \T$, hence $\mathcal{QC}_k(f)\neq \emptyset$.
		
		To show that $u_f^*$ is also $k-$quasiconvex note that for any $u\in \mathcal{QC}_k(f)$ we have
		\begin{equation}
			\label{u_f^*-quasiconvex}
			u(x) \leq 
			\min_{\substack{y_1,\dots,y_k \in \S(x) \\ y_i \neq y_l}} 
				\, \max_{i=1,\dots,k} \left\{u(y_i)\right\}
						\leq
						\min_{\substack{y_1,\dots,y_k \in \S(x) \\ y_i \neq y_l}}
						\,  \max_{i=1,\dots,k} 
						\left\{u^*_f(y_i)\right\}
		\end{equation}
		at every $x\in\T$.
		Taking the supremum in~\eqref{u_f^*-quasiconvex}, it follows that 
		\[
		u^*_f (x) \leq
		\min_{\substack{y_1,\dots,y_k \in \S(x) \\ y_i \neq y_l}} \max_{i=1,\dots,k} 
						\left\{u^*_f(y_i)\right\}
		\]
		and we obtain that $u_f^*$ is $k-$quasiconvex using Theorem~\ref{teo.charac.local}.
		Moreover, one can use the comparison principle Theorem~\ref{cp-general} to show that every 
		$u \in \mathcal{QC}_k(f)$ verifies
		\[
		u(x) \leq  \sup \{f(y) \colon y\in [0,1]\}
		\]
		and then $u_f^*$ is well defined.

		Now, we aim to show that $u_f^*$ is bounded by $f$ on the boundary, that is, we want to show 
		\eqref{pepe}.
		First, note that for any $\pi\in\partial \T$ we have
		\begin{equation}
			\label{u_f^*-menor-f-borde}
				u_f^*(\pi) = \limsup_{x\to \pi} u_f^*(x) \leq f(\psi(\pi))
		\end{equation}
		due to the definition of $u_f^*$ as the supremum for functions in $\mathcal{QC}_k(f)$.
		This shows that $u_f^*$ in fact belongs to $\mathcal{QC}_k(f)$ and 
		therefore uniqueness of the $k-$quasiconvex envelope follows.
		This completes the proof for a bounded boundary datum $f$.
		
		We continue with the case where $f$ is continuous to show that 
		$u_f^*$ attains $f$ on $\partial\T$. Assume that for some $\pi\in\partial\T$ 
		the inequality \eqref{u_f^*-menor-f-borde} is strict and let $\varepsilon>0$ such that
		\begin{equation}
			\limsup_{x\to \pi} u_f^*(x)< f(\psi(\pi)) - \varepsilon.
		\end{equation}

		Let $j$ be such that on the interval $I_{\pi,j}$ one has
		\begin{equation}
			\min_{y\in I_{\pi,j}} f(y) > f(\psi(\pi)) - \frac{\varepsilon}{2},
		\end{equation}
		the existence of such an interval $I_{\pi,j}$ follows because $f$ is continuous.

		Now, recall that $I_{\pi,j}$ is a decreasing sequence of intervals such that 
		$\psi(\pi)\in I_{\pi,j}$ for every $j$. Denote by $x_j \in \T$ to the vertex for which 
		$I_{x_j} = I_{\pi,j}$ and write $\mathbb{T}_m^{\,x_j}$ for the subtree with regular 
		$m-$branching that has $x_j$ as root, that is, the subtree of $\T$ 
		containing all successors of $x_j$ of any level.

		Define $v \colon \T \to \R$ by the formula
		\begin{equation} \label{def.v}
			v(x) \coloneqq 	
				\begin{cases}
					\min\limits_{y\in I_{\pi,j}} f(y) & \text{ if } x\in \mathbb{T}_m^{\,x_j}, \\
					\min\limits_{y\in [0,1]} f(y) & \text{ otherwise. }
				\end{cases}
		\end{equation}
		We check that $v\in \mathcal{QC}_k(f)$. 
		It is immediate from \eqref{def.v} that $v$ is bounded above by $f$ on $\partial \T$.
		To show that $v$ is $k-$quasiconvex we just require to check the definition at $x_{j}$ and its 
		predecessor, $\hat{x}_j$.
		Since $\hat{x}_j\notin \mathbb{T}_m^{\,x_j}$ we have
		\[
			v(\hat{x}_j) 
			=
				\min\limits_{y\in [0,1]} f(y)
				\leq 
				\min_{\substack{y_1,\dots, y_k \in \S(\hat{x}_j) \\ y_i \neq y_l}} 
				\max_{i=1,\dots,k} \left\{v(y_i)\right\}
		\]
		because $v$ only at the successor $x_j \in \S(\hat{x}_j)$ takes a possible different value 
		from $v(\hat{x}_j)$ and this value is bigger or equal than $v(\hat{x}_j)$.
		We also have that 
		\[
			v(x_j) =
			\min\limits_{y\in I_{\pi,j}} f(y)
			= 
			\min_{\substack{y_1,\dots,y_k \in \S(x_j) \\ y_i \neq y_l}} 
			\max_{i=1,\dots,k} \left\{v(y_i)\right\} = \min\limits_{y\in I_{\pi,j}} f(y).
		\]
		A similar argument can be applied at any other vertex in $\T$.
		Hence, we conclude that $v\in \mathcal{QC}_k(f)$. 

		Then, $u_{f}^*(x) \geq v(x)$ for any $x\in\T$, so, in particular 
		\[
			u_f^*(x_j) 
			\geq 
			\min_{I_{\pi,j}} f(y) 
			> 
			f(\psi(\pi)) - \frac{\varepsilon}{2}.
		\]
		Hence, it follows that 
		\[
		\liminf_{x\to \pi} u_f^*(x)
		\geq
		f(\psi(\pi)) - \frac{\varepsilon}{2} 
		.
		\]
		Finally, we have obtained
		\[
		f(\psi(\pi)) - \frac{\varepsilon}{2} 
		\leq \liminf_{x\to \pi} u_f^*(x) \leq
		\limsup_{x\to \pi} u_f^*(x)
		<
		f(\psi(\pi)) - \varepsilon ,
		\]
		which contradicts our assumption over $\pi$ and completes the proof.
	\end{proof}

	The previous result showed that the quasiconvex envelope, $u^*_f$, is well defined. 
	Next, we look for the equation that it satisfies.  

	\begin{lem}\label{Lemma:eq-tree-quasiconvex}
		The $k-$quasiconvex envelope $u^*_f$ is the largest solution of the problem
		\begin{equation} 
		\label{eq-tree-quasiconvex-lema}
		\begin{cases}
			\displaystyle u (x)  = 
			\min_{\substack{y_1,\dots,y_k \in \S({x}) \\ y_i \neq y_j}} 
			\,
			\max_{i=1,\dots,k} 
			\left\{u(y_i)	\right\}  &\text{ for } x\in\T, 
			\\
			\displaystyle u(\pi) \leq f(\psi(\pi)) & \text{ for } \pi\in \partial \T.
			\end{cases}
		\end{equation}
	\end{lem}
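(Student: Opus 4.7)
The plan is to argue in two steps: first that $u^*_f$ itself satisfies the equation in~\eqref{eq-tree-quasiconvex-lema}, and then that any solution of~\eqref{eq-tree-quasiconvex-lema} is automatically a member of $\mathcal{QC}_k(f)$, so it is dominated by $u^*_f$ by definition.

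For the first step, the previous lemma already gives that $u^*_f$ is $k-$quasiconvex and satisfies the boundary inequality, so by Theorem~\ref{teo.charac.local} we have
\[
u^*_f(x)\leq \min_{\substack{y_1,\dots,y_k\in\S(x)\\ y_i\neq y_j}}\,\max_{i=1,\dots,k}\{u^*_f(y_i)\}
\]
at every $x\in\T$. The main work is to upgrade this to equality. I would argue by contradiction: assume there exists $x_0\in\T$ such that
\[
u^*_f(x_0)< \min_{\substack{y_1,\dots,y_k\in\S(x_0)\\ y_i\neq y_j}}\,\max_{i=1,\dots,k}\{u^*_f(y_i)\}=:\beta,
\]
and define $\tilde u\colon\T\to\R$ by $\tilde u(x)=u^*_f(x)$ for $x\neq x_0$ and $\tilde u(x_0)=\beta$. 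The key claim is that $\tilde u\in\mathcal{QC}_k(f)$, which together with $\tilde u(x_0)>u^*_f(x_0)$ contradicts the definition of $u^*_f$ as the supremum of $\mathcal{QC}_k(f)$ and finishes this step. I expect this verification to be the main (but still fairly direct) technical point, so let me spell it out.

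The boundary condition $\limsup_{x\to\pi}\tilde u(x)\leq f(\psi(\pi))$ is preserved because $\tilde u$ differs from $u^*_f$ at a single vertex, which does not affect limits along branches. For the $k-$quasiconvexity inequality at each $x\in\T$ we check three cases. At $x=x_0$ we have equality by construction, since the successors are unchanged. At $x=\hat x_0$ (the predecessor of $x_0$, if it exists) the right-hand side $\min\max$ is non-decreasing in each $u(y_i)$, hence it only grows when we pass from $u^*_f$ to $\tilde u$; since the left-hand side $\tilde u(\hat x_0)=u^*_f(\hat x_0)$ is unchanged and already satisfied the inequality for $u^*_f$, it continues to do so for $\tilde u$. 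At every other $x$ both sides are unchanged. So $\tilde u$ is $k-$quasiconvex by Theorem~\ref{teo.charac.local}, and belongs to $\mathcal{QC}_k(f)$.

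For the second step, suppose $v$ is any solution of~\eqref{eq-tree-quasiconvex-lema}. Then $v$ in particular satisfies the inequality $v(x)\leq\min\max\{v(y_i)\}$ at every vertex, so by Theorem~\ref{teo.charac.local} it is $k-$quasiconvex, and by the boundary condition $v\in\mathcal{QC}_k(f)$. Hence $v(x)\leq u^*_f(x)$ for all $x\in\T$ by the very definition~\eqref{quasiconvex-envelope-arbol}. Combined with the first step this shows $u^*_f$ is the largest solution, as claimed. (Uniqueness among solutions that attain the continuous datum $f$ on $\partial\T$ then follows from the comparison principle Theorem~\ref{cp-general}, applied with $k=j$ to two such solutions in either order.)
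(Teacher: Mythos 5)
Your proposal is correct and follows essentially the same route as the paper: both prove the equality by contradiction, bumping up the value of $u^*_f$ at a single vertex (the paper adds a small $\delta$, you raise it all the way to the min--max value $\beta$, which is an inessential difference) and checking the modified function stays in $\mathcal{QC}_k(f)$, and both conclude maximality by noting that any solution of~\eqref{eq-tree-quasiconvex-lema} belongs to $\mathcal{QC}_k(f)$. Your explicit check at the predecessor $\hat x_0$ via monotonicity of the min--max is a slightly more careful write-up of the paper's one-line justification, but the argument is the same.
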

	\begin{proof}
		Since $u^*_f$ is $k-$quasiconvex, we have that
		\[
			u^*_f(x)  \leq 
				\min_{\substack{y_1,\dots,y_k \in \S({x}) \\ y_i \neq y_j}} 
				\,\max_{i=1,\dots,k} \left\{u^*_f (y_i)\right\} ,
		\]
		for any $x\in\T$. Let us show that in fact, we have equality. 
		Arguing by contradiction, suppose that there exists $x\in\T$ for which 
		\[
			u_f^*(x) < \min_{\substack{y_1,\dots,y_k \in \S({x}) \\ y_i \neq y_j}} 
			\,\max_{i=1,\dots,k} \left\{u^*_f(y_i)\right\} ,
		\]
		and choose $\delta>0$ small enough such that adding $\delta$ to the left side 
		the inequality remains strict.
		Consider $v\colon \T \to \R$ defined by
		\[
			v(y) \coloneqq 
			\begin{cases}
 				u_f^*(y)& \text{ if } y\neq x, \\
  				u_f^*(x) + \delta & \text{ if } y=x.
			\end{cases}
		\]
		We claim that $v\in\mathcal{QC}_k(f)$. 
		To show this we just need to prove that $v$ is $k-$quasiconvex. 
		At $x,$ $v$ is $k-$quasiconvex by the choice of $\delta$. For $y \in \T\setminus\{x\}$ 
		we have
		\[
			v(y) = u_f^*(y) \leq \min_{\substack{y_1,\dots,y_k \in \S({x}) \\ y_i \neq y_j}} 
			\max_{i=1,\dots,k} \left\{u^*_f(y_i)\right\}
			\leq 
			\min_{\substack{y_1,\dots,y_k \in \S({x}) \\ y_i \neq y_j}} \max_{i=1,\dots,k} 
			\left\{v(y_i)\right\},
		\]
		since $v \geq u^*_f$.
		This proves $v\in\mathcal{QC}_k(f)$, but it contradicts the definition of $u_f^*$ as 
		the supremum of $\mathcal{QC}_k(f)$ since $v(x) > u_f^*(x)$.
		This proves that $u_f^*$ solves~\eqref{eq-tree-quasiconvex-lema}.

		Now, observe that any other function $u$ that solves~\eqref{eq-tree-quasiconvex-lema} 
		is $k-$quasiconvex and its below $f$ on $\partial \T$. 
		Hence, it follows that $u\in\mathcal{QC}_k(f)$ and we must have $u\leq u_f^*$. 
		Therefore, $u_f^*$ is the largest solution of~\eqref{eq-tree-quasiconvex-lema}.
	\end{proof}

	Now, we are ready to end the proof of Theorem \ref{teo-quasiconvex}.

	\begin{proof}[Proof of Theorem \ref{teo-quasiconvex}] 
		We have already proved that the $k-$quasiconvex envelope $u^*_f$ is the largest solution 
		to \eqref{eq-tree-quasiconvex-lema}. 
		
		To finish the proof we need to show that it is the unique solution to
		\eqref{eq-tree-quasiconvex-lema} when the boundary datum $f$ is continuous.
		Assume that $v$ is a solution to~\eqref{eq-tree-quasiconvex-lema} that reaches 
		the boundary condition. So, $v$ is $k-$quasiconvex and from our previous result 
		we have that
		\[
			v(x) \leq u_f^*(x) \qquad \text{for all }x\in\T.
		\]
		Since $v$ and $u_f^*$ coincide on $\partial \T$, 
		we show the opposite inequality applying the comparison principle Theorem~\ref{cp-general}.
		Then, $v=u_f^*$ and the problem~\eqref{eq-tree-quasiconvex-lema} has a unique solution 
		for continuous boundary conditions.
	\end{proof}

\subsection{The quasiconvex envelope of a function inside $\T$}
	Finally, we include the proofs for the $k-$quasiconvex envelope of a function $g\colon \T \to \R$. 
	Recall that the $k-$quasiconvex envelope of $g$, $u_g^{\star} $, is given by
	\begin{equation}
	\label{solucion-obstaculo.99}
	u_g^{\star} (x) = \sup \left\{u(x) \colon u\in \mathfrak{QC}_k(g)\right\},
	\end{equation}
	where
	\begin{equation}
		\mathfrak{QC}_k(g) \coloneqq \left\{u\colon \T\to \R \colon u 
		\text{ is $k-$quasiconvex and } u(x)\leq g(x) \quad \forall x\in\T \right\}.
	\end{equation}
	When we assume that $g$ is bounded below the $k-$quasiconvex envelope is well defined
	since $u \equiv \inf g \in \mathfrak{QC}_k(g)$.
	Recall that the $k-$quasiconvex envelope $u_g^{\star} $ is also unique 
	(this fact can be proved exactly as we did for the quasiconvex envelope of a boundary datum).

	One can characterize $u_g^{\star} $ as the solution to the obstacle problem for 
	the equation \eqref{def-quasiconvex-equiv}. This is the content of Theorem \ref{teo-obstaculo} 
	that we prove next.

	\begin{proof}[Proof of Theorem~\ref{teo-obstaculo}]
		First, note that there is at least one $k-$quasiconvex function bounded above by $g$ on $\T$.
		In fact, the function $u \colon \T\to\R$ given by $u(x) = \inf \{g(y) \colon y\in\T\}$ is well-defined 
		because $g$ is bounded below, and $u$ is also $k-$quasiconvex.
		Then, $\mathfrak{QC}_(g) \neq \emptyset$. 
		In addition, we have that every $v\in \mathfrak{QC}_k(g) $ verifies
		\[
			v(x) \leq g(x) \leq \sup \{g(y) \colon y\in\T\} < +\infty.
		\]		
		Hence, $u_g^{\star} $ is well-defined and bounded above by $g$.

		To show that $u_g^{\star}$ is $k-$quasiconvex, we use that for any $u\in \mathfrak{QC}(g)$
		\[
			u(x) \leq 
			\min_{\substack{y_1,\dots,y_k \in \S(x) \\ y_i \neq y_j}} 
			\, \max_{i=1,\dots,k} \left\{u (y_i)\right\}  \leq 
			\min_{\substack{y_1,\dots,y_k \in \S(x) \\ y_i \neq y_j}} \max_{i=1,\dots,k}
			\left\{u_g^{\star} (y_i)\right\}.
		\]
		So, taking supremum over $u\in \mathfrak{QC}_k(g)$ the $k-$quasiconvexity 
		of $u_g^{\star} $ follows. This proves that $u_g^{\star}$ is a solution of the obstacle problem.

		Let $v^\star$ be the largest solution of the obstacle problem
		\begin{equation}
			\label{obstaculo-ecuacion-lema}
				\begin{cases}
					\displaystyle u (x)  \leq  
					\min_{\substack{y_1,\dots,y_k \in \S(x) \\ y_i \neq y_j}}\, 
					\max_{i=1,\dots,k} \left\{v (y_i)\right\} &\text{ for } x\in\T\\
			 		\displaystyle  v(x) \leq g(x) & \text{ for } x \in \T,
			\end{cases}
		\end{equation}
		that is,
		\begin{equation}\label{ec.v^*}
			v^\star(x) = \sup \left\{v(x) \colon v \mbox{ satisfies  \eqref{obstaculo-ecuacion-lema}} \right\}.
		\end{equation}
		Since $u_g^{\star} $ satisfies \eqref{obstaculo-ecuacion-lema} we have $u_g^{\star} \leq v^\star(x)$. 
		Our goal now is to prove that 
		\[
			v^*(x) = u_g^{\star} (x)
		\] 
		for every $x\in\T$.
		Note that by definition $v^\star$ is $k-$quasiconvex and bounded above by $g$ at all vertices. 
		So, $v^\star\in \mathfrak{QC}_k(g)$ and therefore $v^\star(x)\leq u_g^{\star} (x)$ for any 
		$x\in\T$.

		It remains to prove the claims about the coincidence set $CS(g)$. 
		If $x\in CS(g)$, using that $u_g^{\star} \leq g$ at any vertex, we have  
		\[
			g(x) =
			u_g^{\star} (x) 
			\leq 
			\min_{\substack{y_1,\dots,y_k \in \S(x) \\ y_i \neq y_j}} \max_{i=1,\dots,k}  
			\left\{u_g^{\star} (y_i)\right\}
			\leq 
			\min_{\substack{y_1,\dots,y_k \in \S(x) \\ y_i \neq y_j}} \max_{i=1,\dots,k}  
			\left\{g (y_i)\right\},
			\]
		and it follows that $g$ verifies the inequality~\eqref{coincidence-set} in the coincidence set $CS(g)$.

		For the complement of $CS(g)$ we want to prove that $u_g^{\star}$ 
		satisfies the equation \eqref{fuera-coincidence-set}. 
		Arguing by contradiction, suppose that for some $x\not\in CS(g)$ we have
		\[
		u_g^{\star} (x)
		< 
		\min_{\substack{y_1,\dots,y_k \in \S(x) \\ y_i \neq y_j}} \max_{i=1,\dots,k}
		\left\{u_g^{\star} (y_i)\right\}.
		\]
		Thus, adding a small $\delta>0$ to the left-hand side the previous inequality remains strict.
		Therefore, the function
		\[
		v(y) = 
			\begin{cases}
			u_g^{\star} (y) & \text{ for } y\neq x, \\
			u_g^{\star} (x) + \delta & \text{ for } y=x.
			\end{cases}
		\]
		is $k-$quasiconvex (see the proof of Lemma \ref{Lemma:eq-tree-quasiconvex}) 
		and we still have $v\leq g$, contradicting the maximality assumption of $u_g^{\star} $.
	\end{proof}

	The proof of Corollary \ref{corol.compar.interior} is analogous to the one of Corollary \ref{corol.compar}
	and thus it is left to the reader.

\section{The quasiconvex envelope for monotone boundary data} \label{sect-examples}

	In this section, we present a simple example that illustrates that quasiconvex envelopes are easy to 
	compute when the boundary data are monotone (or piecewise monotone).

	\begin{ex} Assume that $f\colon[0,1] \mapsto \mathbb{R}$ is continuous and monotone increasing 
		(the case when $f$ is decreasing is completely analogous).
		Then, the $k-$quasiconvex envelope of the boundary datum $f$ inside $\T$ is given by
		\[
			u^*_f(x) = f(\psi(x,k-1,k-1,\dots)).
		\]
		In fact, one can check that $u^*_f$ is a 
		solution to the problem that characterizes the $k-$quasiconvex 
		envelope, \eqref{eq-tree-quasiconvex-lema}, that is, $u$ satisfies
		\begin{equation} 
			\label{eq-tree-quasiconvex-lema.77}
			\begin{cases}
				\displaystyle u (x)  = 
				\min_{\substack{y_1,\dots,y_k \in \S(x) \\ y_i \neq y_j}} \max_{i=1,\dots,k}			
				\left\{u(y_i)\right\}  &\text{ for } x\in\T, \\
				\displaystyle u(\pi) = f(\psi(\pi)) & \text{ for } \pi\in \partial \T.
			\end{cases}
		\end{equation}
		In fact, since we have that $f$ is increasing, it holds that
		\begin{align*}
			u(x,0) = f(\psi(x,0,k-1\dots)) 
			\leq u(x,1) &= f(\psi(x,1,k-1\dots)) \leq\dots \\ & \,\dots \leq u(x,m-1)= f(\psi(x,m-1,k-1\dots)),
		\end{align*}
		and hence is the $k-$th smallest value of $u$ on $\S(x)$ is attained at 
		$y=(x,k-1)$ and the equation \eqref{eq-tree-quasiconvex-lema.77} is satisfied.

		Moreover, since $f$ is continuous we get that
		\[
			\lim_{x\to \pi} u(x) = \lim_{x\to \pi} f(\psi(x,k-1,k-1,\dots)) = f(\psi (\pi)).
		\]

		When $f$ has a finite number of maximums/minimums 
		($f$ is increasing in some subintervals and decreasing in others), 
		then the previous idea can be applied to obtain the values of $u$ at the nodes $x$ such that 
		$\psi(x)$ is at the interior of such intervals. 
		Details are left to the reader.
		
		Therefore, one can construct a solution to the problem for the quasiconvex envelope 
		\eqref{eq-tree-quasiconvex-lema.77} by approximation. 
		Fix a continuous boundary datum $f$ and approximate it by the piecewise linear continuous 
		function $f_n$ that interpolates $f$ at the points $\psi(x_i) = (i-1)/m^n$, $i=1,\dots,m^n+1$. 
		These approximating functions $f_n$ are monotone on every interval $[x_i,x_{i+1}]$ 
		and converge uniformly to $f$ as $n\to \infty$.
		Therefore, one can compute its quasiconvex envelope 
		$u^*_{f_n}$ to obtain the values inside the intervals $[x_i,x_{i+1}]$.
		In the other nodes of $\T$, that are a finite number, one computes the values solving backward the 
		finite system of equations, and then pass to the limit as $n \to \infty$. 
		A similar approximation argument can be found in \cite{DPMR1,DPMR2}.
 
		This approximation of a solution to \eqref{eq-tree-quasiconvex-lema.77} is well suited for explicit 
		computations, and also shows existence (and uniqueness follows from the comparison argument) of a 
		solution. 
		However, it is not immediate from this construction that solutions 
		to \eqref{eq-tree-quasiconvex-lema.77} are related to the quasiconvex envelope of $f$
		inside $\T$. 
		For this reason, we prefer to construct the solution (or the quasiconvex envelope) as a supremum of 		functions that verify an inequality and are below $f$ on $\partial \T$.
	\end{ex}
	
	Finally, we remark that for a function $g:\T \mapsto \mathbb{R}$ such that $g$ is increasing in the sense that
	$$
	g(x) \leq g(y) \qquad \mbox{when } \psi(x) \leq \psi(y), 
	$$
	we have that 
	$$g(x) \mbox{ is $k-$quasiconvex (and also $k-$convex)} $$
	for any $k$. 
	This holds since
			$$
			g(x) \leq g (y) \qquad \forall y \in \S(x).
			$$
			Therefore, the $k-$quasiconvex envelope of $g$, $u^\star_g(x)$,
			coincides with $g$ in $\T$ and $CS(g)=\T$.
			
			This has to be contrasted with the fact that for an increasing function $f$ as boundary datum
			on $\partial \T$ we have $u^*_f(x)\le f(\psi(x))$ in $\T$ and the inequality is strict when $f$ is
			strictly increasing.
			
			As we mentioned in the introduction, the cases $k=1$ and $k=m$ are simpler (and hence less interesting).
			In the special case $k=1$ the equation for the quasiconvex envelope is
			$$
			u (x)  =  \min_{y\in \S(x) } \left\{u(y)\right\}.
			$$
			In this special case, the quasiconvex envelope of a boundary datum $f$ is given by
			$$
			u^*_f (x) = \inf_{z\in I_x} f(z).
			$$
			Analogously, for $k=m$ we have that the associated equation is
			$$
			u (x)  =  \max_{y\in \S(x) } \left\{u(y)\right\},
			$$
			and in this case, the quasiconvex envelope of a boundary datum $f$ is 
			$$
			u^*_f (x) = \sup_{z\in I_x} f(z).
			$$

		\section{On the equations that characterize the convex and quasiconvex envelopes} \label{sect-PDEs}

	In this section, we aim to compare and obtain a complete analogy between the equations 
	that appear when one considers the convex and quasiconvex envelopes on $\mathbb{R}^N$ and $\T$. 
	For the tree case, to obtain a complete analogy, we consider $k=2$, that is, we look for the $2-$convex and the $2-$quasiconvex
	envelopes of a boundary datum.

	\subsection{The Euclidean case} 
		In $\mathbb{R}^N$ let us first recall that the usual Laplacian is given by
		\[
			\Delta u(x) = \sum_{i=1}^N \frac{\partial^2 u}{\partial x_i^2}(x) 
			= \sum_{i=1}^N \lambda_i(D^2u(x)).
		\]
		Here, and in what follows, $\lambda_1\leq \lambda_2\leq\cdots\leq\lambda_N$ 
		are the ordered eigenvalues of the Hessian matrix, $D^2u$.
		That is, the Laplacian is given by the sum of the pure second derivatives 
		or by the sum of the eigenvalues of the Hessian matrix. 

		We have that the convex envelope inside a domain $\Omega\subset
		\mathbb{R}^N$ turns out to be a solution to
		\begin{equation} \label{convex-envelope-usual-eq}
			\begin{array}{ll}
			\lambda_1 (D^2 u) (x) = 0 \qquad &x\in  \Omega, 
			\end{array}
		\end{equation}
	where the equation has to be interpreted in viscosity sense. 
	Here $\lambda_1(D^2u)$ is the smallest of the eigenvalues of $D^2u$. 
	We refer to \cite{BlancRossi,OS,Ober33}.
	Notice that the equation \eqref{convex-envelope-usual-eq} is equivalent to 
	\begin{equation} \label{nacional}
		\min_{|v|=1} \langle D^2 u(x) v, v \rangle =0.
	\end{equation}
	This says that the equation that governs the convex envelope is just the minimum among all possible 
	directions of the second derivative of the function at $x$ equal to zero. 
	
	As we mentioned in the introduction, there is a PDE for the quasiconvex envelope, see 
	\cite{BGJ12a,BGJ12b,BGJ13}. 
	In fact, the quasiconvex envelope of a boundary datum $u$ in the Euclidean space is a viscosity solution to 
	\begin{equation}\label{ec-RN.55}
			\min_{\substack{|v|=1 \\ \langle v, \nabla u(x) \rangle =0}} 
			\langle D^2 u(x) v, v \rangle =0.
	\end{equation}
	In words, the equation for the quasiconvex envelope involve 
	the minimum of the second derivatives in directions that are perpendicular to the gradient of the solution. 

	Finally, let us say that the infinity Laplacian in the Euclidean setting is given by the nonlinear operator
	\[
		\Delta_\infty u(x) \coloneqq \langle D^2u(x) \nabla u(x), \nabla u(x) \rangle,
	\]
	that is, the infinity Laplacian is given by the second derivative in the direction of the gradient of the 
	function.

\subsection{The tree case} 
	Fix $k=2$. The usual Laplacian on $\T$ is defined by the mean value formula, 
	\begin{equation} \label{hhh}
		\Delta u (x) \coloneqq 
		\frac1m \sum_{y\in\S(x)} \big(u(y) - u(x) \big),
		\qquad\forall x\in\T,
	\end{equation}
	see for instance \cite{KLW}.
	In \cite{DPFRconvex} we introduce a notion of convexity based on the same idea of 
	``{}segments"{} that we used here, using finite binary trees as segments. 
	The convex envelope of a boundary datum on $\T$ with this setting satisfies the equation
	\[
		0 = \min_{\substack{y_i,y_j\in\mathcal{S}(x)\\ y_i\neq y_j}} 
        \left\{ \frac12 
        u(y_i) + \frac12 u(y_j) - u(x) \right\}.
    \]
	In this case, in clear analogy with \eqref{nacional}, we can identify the analogous to the eigenvalues of 
	the Hessian that are given by,
    \begin{equation} \label{uu.II}
        \left\{ 
            \frac12 u(y_i) + \frac12 u(y_j) - u(x) 
        \right\}_{i \neq j} .
    \end{equation}
	Then, taking the average we obtain the usual Laplacian given by \eqref{hhh}, where we look at the Laplacian 
	as the sum of the eigenvalues of the Hessian.

	A different way to find the Laplacian runs as follows: 
	fix $x\in \T$ and think about it as the midpoint between two successors $y_i, y_j\in\S(x)$ on the tree.
	So, computing the finite central difference approximation	
	\[
		\frac12 u(y_i) + \frac12 u(y_j) - u(x),
	\]
	we can understand it as a ``{}mixed second derivative"{} in the directions from $x$ to 
	$y_i$ and from $x$ to $y_j$.
	Then, the pure second derivative in the direction of $y\in \S(x)$ is given by
	\[
		u(y)-u(x).
	\]
	Adding these pure second derivatives in every direction, that is, for every successor $y$, and dividing by 
	$m$, we obtain again the usual Laplacian given by \eqref{hhh} but now interpreted as the sum of the pure 
	second derivatives.

	One the other hand, following \cite{Ober,s-tree1} we have that the infinity Laplacian in the tree is given 
	by
	\[
		u(x) = \frac12 \max_{y \in \S({x})} u(y) + \frac12 \min_{y \in \S({x})} u(y).
	\]
	Therefore, we can identify the ``{}direction of the gradient"{} of a function defined in the tree as the 
	two directions given by the successors at which $\max_{y \in \S({x})} u(y)$ and $\min_{y \in \S({x})} u(y)$ 
	are attained.
	
	Now, for the $2-$convex envelope of a boundary datum on $\T$, in \cite{DPFRconvex} it is shown that the associated equation 
is given by 
\[
    \min_{\substack{y_1,y_2 \in \S({x})\\ y_1 \neq y_2 }}  \frac12 u(y_1) + \frac12 u(y_2) - u(x)=0, 
\]
    that is, the minimum of the second eigenvalues of the Hessian (see \eqref{uu.II} and c.f. \eqref{convex-envelope-usual-eq}).

	Finally, for the $2-$quasiconvex envelope of a boundary datum on $\T$, we have proved that it satisfies the 
	equation
	\[
		u (x)  = \min_{\substack{y_1,y_2 \in \S({x}) \\ y_1 \neq y_2}} 
		\max_{i=1,2} \left\{u(y_i)\right\},
	\]
	that is, the value $u(x)$ is the second smallest value of $u$ on $\S(x)$.

	If we consider pure second derivatives of $u$ in directions ``{}orthogonal to the direction 
	of the gradient"{}, in an analogy with the equation for the $2-$quasiconvex envelope in the Euclidean 
	setting \eqref{ec-RN.55}, and we compute the minimum, we obtain
	\begin{equation}
		\min_{\substack{y_i \in \S({x}) \\ y_i \neq y^*, \, y_i \neq y_*}}  
		\left\{u(y_i) -u(x) \right\},
	\end{equation}
	where $y^*,y_*$ are the two successors at which the $\max_{y \in \S({x})} u(y)$ and the 
	$\min_{y \in \S({x})} u(y)$ are attained.
	Note that the last expression can be rewritten as
	\[
		\min_{\substack{y_i \in \S(x) \\ y_i \neq y^*, \, y_i \neq y_*}}  
		\left\{u(y_i) -u(x) \right\} = 
		\min_{\substack{y_1,y_2 \in \S(x) \\ y_1 \neq y_2}} \max_{i=1,2} 
		\left\{u (y_i) -u(x)\right\}
		= \min_{\substack{y_1,y_2 \in \S(x) \\ y_1 \neq y_2}} \max_{i=1,2} \left\{u (y_i)\right\}  -u(x),
	\]
	and we have interpreted our equation for the $2-$quasiconvex envelope on $\T$ as the minimum of the second 
	derivatives of $u$ in directions that are orthogonal to the direction of the gradient of $u$.

        \medskip

{\bf Acknowledgements.}  \

Supported by CONICET grant PIP GI No 11220150100036CO (Argentina), by  UBACyT grant 20020160100155BA (Argentina), by MINECO MTM2015-70227-P (Spain), by MATH-AmSud 22-MATH-04, and by FVF-2021-063–DICYT (Uruguay).

\medskip

\end{document}